\documentclass[12pt]{article}



\usepackage{amsmath,amsthm,amscd,amssymb,latexsym,upref,stmaryrd,epsfig,graphicx}
\usepackage[cp1251]{inputenc}
\usepackage[english]{babel}
\usepackage{mathtext}
\usepackage{amsfonts}
\usepackage {tikz}
\parskip 0.05cm

\setlength{\oddsidemargin}{-1mm}
\setlength{\evensidemargin}{-1mm}
\topmargin -20pt\pagestyle{myheadings}
\usepackage{hyperref}
\usepackage{cite}
\usepackage[numbers,sort&compress]{natbib}
\newcommand*{\mailto}[1]{\href{mailto:#1}{\nolinkurl{#1}}}
\setlength{\baselineskip}{28pt}
\textheight 23true cm
\textwidth 17true cm

\usepackage{amsmath}
\usepackage{amsthm}

\newtheorem{theorem}{Theorem}[section]
\newtheorem{lemma}{Lemma}[section]

\numberwithin{equation}{section}

\begin{document}
\thispagestyle{empty}
 {\Large\bf     Some Bohr-type inequalities for  several subclasses of}\\\\
{\centerline {\Large\bf  harmonic functions} }\\\\
\begin{center}

{\bf Jianying Zhou  \quad  Wanqing Hou  \quad Boyong Long$^*$}
\let\thefootnote\relax\footnotetext{\\This work is supported by NSFC (No.12271001) and Natural Science Foundation of Anhui Province
(2308085MA03), China. \\$^*$Corresponding author.\\ \quad Email: jianying820@163.com;\quad wanqinghou99@163.com; \quad boyonglong@163.com
}
\end{center}

\centerline {\small  (School of Mathematical Sciences, Anhui University, Hefei 230601, China)}
 \hspace*{\fill} \\\\
\noindent{\bf Abstract:}
{In this article,  Bohr type inequalities for some complex valued harmonic functions defined on the unit disk are given. All the results are sharp.
}

\medskip
\noindent {\bf Keywords:} {Bohr's inequality; Bohr radius; harmonic functions }
\medskip

\noindent {\bf 2020 Mathematics Subject Classification:} {30C45;  30C50;  30C80}

\section{Introduction}

Let $\mathbb{D}=\left\{z:|z|<1\right\}$ denote the open unit disk. The classic Bohr's theorem\cite{bohr1914theorem} states that if $f(z)=\sum_{n=0}^{\infty}a_n z^n$ is a bounded analytic function defined on  $\mathbb{D}$ and $|f(z)|<1$ for all $z\in\mathbb{D}$, then
\begin{align}\label{1.1}
 \sum_{n=0}^{\infty}|a_n||z|^n<1
\end{align}
holds for $|z|=r\leq 1/3$ and $1/3$ is the best possible. Inequality (\ref{1.1}) is so-called Bohr's inequality.

In recent years, many scholars have done a lot of work in  Bohr's inequalities of analytic functions. For example, Allu and Halder\cite{allu2021halder} have studied Bohr's inequalities for close-to-convex analytic functions;  Hu\cite{hu2022bohr} has considered Bohr's inequalities for bounded analytic functions of schwarz functions; Allu and Halder\cite{allu2021starlike} have discussed Bohr's inequalities for Ma-Minda functions.

Initially, the definition domain and image domain of the functions involved in Bohr inequality are restricted to the unit disk. Later, this situation has changed. Evdoridis,  Ponnusamy and Rasila \cite{evdoridis2021improved} discussed the Bohr's inequality for functions defined on shifted disk rather than unit disk. \cite{abu2017bohr,ali2017note} have studied bounded analytic functions from $\mathbb{D}$ to punctured disk and wedge domain, respectively.  Furthermore, the classic Bohr inequality has been changed into other forms: Liu\cite{Ming2018Bohr} has replaced the power series coefficients with higher derivatives; Kayumov and Ponnusamy\cite{kayumov2017bohr} have added the positive term to the left-hand side of the classic inequality; Wu\cite{wule2022} has explored some Bohr-type inequalities with one parameter or involving convex combination. It is worth mentioning that the right side of the inequality(\ref{1.1}) was also generalized. For analytic functions $f$ defined on the $\mathbb{D}$, let $d(f(0),\partial f(\mathbb{D}))$ replace the $1-|a_0|$, where $d(f(0),\partial f(\mathbb{D}))$ denotes the distance between $f(0)$ and $\partial f(\mathbb{D})$. Then Bohr's inequality(\ref{1.1}) has been transformed into
\begin{align}\label{1.2}
 \sum_{n=1}^{\infty}|a_n||z|^n\leq d(f(0),\partial f(\mathbb{D})).
\end{align}

Let $\mathcal{H}$ be the class of all complex-valued harmonic functions $f=h+\overline{g}$ defined on the $\mathbb{D}$, where $h$ and $g$ are analytic in $\mathbb{D}$ with the normalization $h(0)=h'(0)-1=0$ and $g(0)=0$. Let $\mathcal{H}_{0}=\left\{{f=h+\overline{g}\in\mathcal{H}: g'(0)=0}\right\}$. Then, each $f=h+\overline{g}\in\mathcal{H}_{0}$ has the following form
$$h(z)=z+\sum_{n=2}^{\infty}a_{n}z^{n} \quad and \quad g(z)=\sum_{n=2}^{\infty}b_{n}z^{n}.$$
In recent decades,  harmonic function has become  a research hotspot.  Iwaniec, Kovalev and Onninen\cite{iwaniec2011nitsche} discussed the Nitsche conjecture of harmonic mappings. Chuaqui, Duren and Osgood\cite{chuaqui2003schwarzian} explored the Schwarzian derivative for harmonic mappings. Furthermore, Axler, Bourdon and Wade\cite{axler2013harmonic} introduced harmonic function theory of higher dimensions.
Schoen and Yau\cite{schoen1978univalent} demonstrated univalent harmonic maps between surfaces. Iwaniec and Onninen\cite{iwaniec2019rado} established $Rad\acute{o}$-Kneser-Choquet Theorem in p-harmonic setting for simply connected domains.

The functions involved in the initial Bohr inequality were analytical functions. Later, Bohr inequalities and Bohr phenomena of harmonic functions were also studied. Huang, Liu and Ponnusamy\cite{huang2021bohr} discussed Bohr type inequalities for harmonic mappings with a multiple zero at the origin; Ali, Abdulhadi and Ng\cite{ali2016bohr}, Gangania and Kumar\cite{gangania2022bohr}, Allu and Halder\cite{allu2021bohr} introduced the Bohr radii for starlike logharmonic mappings, univalent harmonic mappings, some certain harmonic functions, respectively. Also, Liu, Ponnusamy and Wang\cite{liu2020bohr} considered Bohr's phenomenon for the classes of K-quasiregular harmonic mappings. What's more, if the analytic part of the harmonic function satisfies some subordination relations, Liu and Ponnusamy\cite{liu2019bohr} have studied its Bohr's phenomenon.

There are several classes of harmonic functions in $\mathbb{D}$.

$$\mathcal{P_{H}}=\{f=h+\overline{g}\in\mathcal{H}: Re(h'(z))>|g(z)|, z\in\mathbb{D}\};$$
$$\mathcal{P}^{0}_{H}(\alpha)=\{f=h+\overline{g}\in\mathcal{H}_{0}: Re(h'(z)-\alpha)>|g(z)|, 0 \leq \alpha<1, z\in\mathbb{D}\};$$
$$\widetilde{G^{0}_{H}}(\beta)=\{f=h+\overline{g}\in\mathcal{H}_{0}: Re(\frac{h(z)}{z}-\beta)>|\frac{g(z)}{z}|,0 \leq \beta<1, z\in\mathbb{D}\};$$
$$\mathcal{W}^{0}_{H}(\alpha)=\{f=h+\overline{g}\in\mathcal{H}_{0}: Re(h'(z)+\alpha zh''(z))>|(g'(z)+\alpha zg''(z))|,0\leq\alpha<1, z\in\mathbb{D}\};$$
$$ \mathcal{G}^{k}_{H}(\alpha)=\{f=h+\overline{g}\in\mathcal{H}^{0}_{k}: Re(1-\alpha)\frac{h(z)}{z}+\alpha h'(z)>|(1-\alpha)\frac{g(z)}{z}+\alpha g'(z)|, \alpha\geq0, k\geq1, z\in\mathbb{D}\},$$
where $$\mathcal{H}^{0}_{k}=\{f=h+\overline{g}\in\mathcal{H}: h'(0)-1=g'(0)=h''(0)=...=h^{(k)}(0)=g^{(k)}(0)=0\}, \mathcal{H}^{0}_{1}=\mathcal{H}_{0}.$$

In this paper, we would like to generalize the following two theorems to some classes of harmonic functions.\\

\noindent{\bf Theorem A} \cite{kayumov2017bohr}
Suppose that $f(z)$ is analytic and $|f(z)|<1$ in $\mathbb{D}$. Then
\begin{equation*}
|f(z)|+ \sum_{n = N}^\infty |a_n||z|^n \leq 1
\end{equation*}
holds for $|z|=r\leq r_N$, where $r_N$ is the unique positive root of the equation $$2(1+r)r_N-(1-r)^2=0$$ and the radius $r_N$ is the best possible.

\noindent{\bf Theorem B}\cite{kayumov2018improved}
Suppose that $f(z)=\sum_{n=0}^{\infty}a_{n}z^{n}$ is analytic in $\mathbb{D}$, $|f(z)|<1$ and $S_{r}$ denotes the area of the image of the subdisk $|z|<r$ under the mapping $f$. Then
$$\sum_{n=0}^{\infty}|a_{n}||z|^{n} +\frac{16}{9}(\frac{S_{r}}{\pi}) \leq 1 \quad for \quad |z|\leq \frac{1}{3}$$
and the numbers $1/3$ and $16/9$ cannot be improved.

\section{ Some Lemmas }
In order to establish our main results, we need the following lemmas.

\begin{lemma}\label{lemma2.1}\rm{\cite{ghosh2019subclass}
Suppose that $f(z) = h + \bar{g}\in W^{0}_{H}(\alpha)$ for $\alpha \geq 0$. Then for $n\geq2$,
\begin{flalign*}
 \begin{split}
(i)\,\,&|a_n|+|b_n|\leq \frac{2}{\alpha n^2+n(1-\alpha)};\\
(ii)\,\,& ||a_n|-|b_n||\leq \frac{2}{\alpha n^2+n(1-\alpha)};\\
(iii) \,\,&  |a_n| \leq \frac{2}{\alpha n^2+n(1-\alpha)}.
 \end{split}&
\end{flalign*}
All these results are sharp for the function $f$ be given by $f(z)=z+2\sum\limits_{n = 2}^\infty \frac{1}{\alpha n^2+n(1-\alpha) }z^n$. }
\end{lemma}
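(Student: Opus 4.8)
The plan is to reduce the defining condition of $\mathcal{W}^{0}_{H}(\alpha)$ to the classical coefficient estimate for functions of positive real part, after a rotation trick that couples $h$ and $g$. Write $h(z)=z+\sum_{n\ge 2}a_nz^n$ and $g(z)=\sum_{n\ge 2}b_nz^n$, and put $\lambda_n:=\alpha n^2+n(1-\alpha)=n\bigl(1+\alpha(n-1)\bigr)$, which is strictly positive for all $n\ge 2$ whenever $\alpha\ge 0$. A routine differentiation gives
\[
p(z):=h'(z)+\alpha z h''(z)=1+\sum_{n\ge 2}\lambda_n a_n z^{n-1},\qquad q(z):=g'(z)+\alpha z g''(z)=\sum_{n\ge 2}\lambda_n b_n z^{n-1},
\]
and the hypothesis $f\in\mathcal{W}^{0}_{H}(\alpha)$ says precisely that $\mathrm{Re}\,p(z)>|q(z)|\ge 0$ on $\mathbb{D}$, with $p(0)=1$.

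Next I would observe that for every real $\theta$ the function $\Phi_\theta(z):=p(z)-e^{i\theta}q(z)$ satisfies $\Phi_\theta(0)=1$ and
\[
\mathrm{Re}\,\Phi_\theta(z)=\mathrm{Re}\,p(z)-\mathrm{Re}\bigl(e^{i\theta}q(z)\bigr)\ge \mathrm{Re}\,p(z)-|q(z)|>0,
\]
so $\Phi_\theta$ lies in the Carath\'eodory class. The coefficient of $z^{n-1}$ in $\Phi_\theta$ is $\lambda_n(a_n-e^{i\theta}b_n)$, so the classical bound $|c_k|\le 2$ for the Taylor coefficients of a function with positive real part normalized to value $1$ at the origin yields
\[
\bigl|a_n-e^{i\theta}b_n\bigr|\le \frac{2}{\lambda_n}\qquad (n\ge 2,\ \theta\in\mathbb{R}).
\]
As $\theta$ varies, $e^{i\theta}b_n$ traces the whole circle of radius $|b_n|$, so $\bigl|a_n-e^{i\theta}b_n\bigr|$ attains both $|a_n|+|b_n|$ and $\bigl||a_n|-|b_n|\bigr|$; these give $(i)$ and $(ii)$ (the degenerate cases $a_n=0$ or $b_n=0$ being trivial), and $(iii)$ is immediate from $(i)$ since $|b_n|\ge 0$. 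For sharpness I would verify that $f(z)=z+2\sum_{n\ge 2}\lambda_n^{-1}z^n$ (hence $g\equiv 0$) gives $p(z)=(1+z)/(1-z)$, so $\mathrm{Re}\,p(z)>0=|q(z)|$ and $f\in\mathcal{W}^{0}_{H}(\alpha)$, while $|a_n|+|b_n|=\bigl||a_n|-|b_n|\bigr|=|a_n|=2/\lambda_n$, forcing equality in all three parts.

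There is no serious obstacle here; the one point that requires care is that the estimate must hold for the entire family $\{\Phi_\theta\}_{\theta\in\mathbb{R}}$ simultaneously, which is exactly where the strong hypothesis $\mathrm{Re}\,p>|q|$ is used rather than the weaker $\mathrm{Re}\,p>0$ alone, together with checking that the two extremal rotations really realize the sum and the difference of the moduli — both elementary.
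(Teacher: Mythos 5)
Your argument is correct and complete: the rotation $\Phi_\theta=p-e^{i\theta}q$ combined with the Carath\'eodory coefficient bound, the choice of the two extremal rotations, and the verification of the extremal function $z+2\sum_{n\ge2}\lambda_n^{-1}z^n$ all check out. The paper itself only cites this lemma from Ghosh--Vasudevarao without proof, but your approach is exactly the standard one used there and is the same rotation-plus-Carath\'eodory device the authors themselves employ in the proof of Theorem 3.1 (inequalities (3.2)--(3.3)) for the class $\widetilde{G^{0}_{H}}(\beta)$.
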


\begin{lemma}\label{lemma2.2}\rm{\cite{ghosh2019subclass}
Suppose that $f(z) = h + \bar{g}\in W^{0}_{H}(\alpha)$ for $0\leq\alpha < 1$. Then
\begin{equation*}
r + 2\sum\limits_{n = 2}^\infty \frac{(-1)^{n-1} r^n}{\alpha n^2+n(1-\alpha)} \leq |f(z)|\leq r + 2\sum\limits_{n = 2}^\infty \frac{ r^n}{\alpha n^2+n(1-\alpha)}
\end{equation*}holds for $r=|z|$, and the equality holds for  $f(z)=z+2\sum\limits_{n = 2}^\infty \frac{1}{\alpha n^2+n(1-\alpha) }z^n$ or its rotations.}
\end{lemma}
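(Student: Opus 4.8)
\medskip
The plan is to prove the two inequalities separately. The upper bound is an immediate consequence of the coefficient estimates in Lemma~\ref{lemma2.1}; the lower bound -- the delicate half -- will be obtained from an integral representation of $h$ and $g$ in terms of the analytic functions $P(z)=h'(z)+\alpha zh''(z)$ and $Q(z)=g'(z)+\alpha zg''(z)$ that govern membership in $W^{0}_{H}(\alpha)$.

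For the upper bound, write $h(z)=z+\sum_{n\ge2}a_nz^n$ and $g(z)=\sum_{n\ge2}b_nz^n$; since $f=h+\overline g$, the triangle inequality gives
\[
|f(z)|\le|h(z)|+|g(z)|\le r+\sum_{n=2}^\infty\bigl(|a_n|+|b_n|\bigr)r^n ,
\]
and Lemma~\ref{lemma2.1}(i), which bounds $|a_n|+|b_n|$ by $2/(\alpha n^2+n(1-\alpha))$, produces exactly the stated right-hand side.

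For the lower bound, observe first that by definition $P(0)=1$, $Q(0)=0$ and $\mathrm{Re}\,P(z)>|Q(z)|$ in $\mathbb D$. Comparing Taylor coefficients, and using the elementary identity $\int_0^1 v^{n-1}\bigl(1-v^{(1-\alpha)/\alpha}\bigr)\,dv=(1-\alpha)/(\alpha n^2+n(1-\alpha))$, one verifies the representations
\[
h(z)=z\int_0^1\rho(v)P(zv)\,dv,\qquad g(z)=z\int_0^1\rho(v)Q(zv)\,dv ,
\]
where $\rho(v)=\bigl(1-v^{(1-\alpha)/\alpha}\bigr)/(1-\alpha)$ for $0<\alpha<1$ (and $\rho\equiv1$ for $\alpha=0$) is a probability density on $[0,1]$ with $\int_0^1 v^{n-1}\rho(v)\,dv=1/(\alpha n^2+n(1-\alpha))$. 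Writing $z=re^{i\theta}$ and combining the two formulas,
\[
f(z)=re^{i\theta}\int_0^1\rho(v)\Bigl(P(zv)+e^{-2i\theta}\,\overline{Q(zv)}\Bigr)\,dv ,
\]
so
\[
|f(z)|\ \ge\ r\,\mathrm{Re}\int_0^1\rho(v)\Bigl(P(zv)+e^{-2i\theta}\,\overline{Q(zv)}\Bigr)\,dv\ =\ r\int_0^1\rho(v)\,\mathrm{Re}\,\Phi(zv)\,dv ,
\]
where $\Phi(w)=P(w)+e^{2i\theta}Q(w)$. The point is that $\Phi$ is analytic in $\mathbb D$ with $\Phi(0)=1$ and $\mathrm{Re}\,\Phi(w)\ge\mathrm{Re}\,P(w)-|Q(w)|>0$, hence $\mathrm{Re}\,\Phi(w)\ge(1-|w|)/(1+|w|)$ by the standard estimate for analytic functions of positive real part normalized by $\Phi(0)=1$. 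Substituting $w=zv$ (so $|w|=rv$) and using the moment identity for $\rho$,
\[
|f(z)|\ \ge\ r\int_0^1\rho(v)\,\frac{1-rv}{1+rv}\,dv\ =\ r+2\sum_{n=2}^\infty\frac{(-1)^{n-1}r^n}{\alpha n^2+n(1-\alpha)} .
\]

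Sharpness is witnessed by $f_0(z)=z+2\sum_{n\ge2}z^n/(\alpha n^2+n(1-\alpha))$: here $Q\equiv0$ and $P(z)=(1+z)/(1-z)$, so $f_0\in W^{0}_{H}(\alpha)$; at $z=r$ it attains the upper bound, and at $z=-r$ (where $f_0(-r)$ is real and non-positive) it attains the lower bound, and the same holds for its rotations. The step requiring genuine care is the lower bound: the naive inequalities $|f(z)|\ge|h(z)|-|g(z)|$, or a term-by-term estimate of $\mathrm{Re}\bigl(e^{-i\theta}f(z)\bigr)$, only give $r-2\sum_{n\ge2}r^n/(\alpha n^2+n(1-\alpha))$, which is strictly weaker; what makes the sharp bound work is recombining the analytic and co-analytic parts into the single positive-real-part function $\Phi=P+e^{2i\theta}Q$, together with the verification of the integral representation and of the positivity and moments of the weight $\rho$.
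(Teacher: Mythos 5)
Your proof is correct. Note, however, that the paper itself does not prove Lemma \ref{lemma2.2} at all: it is quoted verbatim from the source \cite{ghosh2019subclass}, so there is no in-paper argument to compare against, and what you have done is reconstruct the growth theorem of Ghosh--Vasudevarao. Your reconstruction is sound: the kernel $\rho(v)=\bigl(1-v^{(1-\alpha)/\alpha}\bigr)/(1-\alpha)$ does have the moments $\int_0^1 v^{n-1}\rho(v)\,dv=1/(\alpha n^2+n(1-\alpha))$ (including total mass $1$ at $n=1$), the representations $h(z)=z\int_0^1\rho(v)P(zv)\,dv$ and $g(z)=z\int_0^1\rho(v)Q(zv)\,dv$ follow by matching Taylor coefficients since $P(z)=1+\sum_{n\ge2}\bigl(\alpha n^2+n(1-\alpha)\bigr)a_nz^{n-1}$ and similarly for $Q$, and the decisive step --- recombining the analytic and co-analytic parts into $\Phi=P+e^{2i\theta}Q$, which is analytic with $\Phi(0)=1$ and $\operatorname{Re}\Phi\ge\operatorname{Re}P-|Q|>0$, then applying $\operatorname{Re}\Phi(w)\ge(1-|w|)/(1+|w|)$ and the positivity of $\rho$ --- is exactly what produces the alternating-sign lower bound that the naive estimate $|f|\ge|h|-|g|$ misses; your sharpness check with $f_0(z)=z+2\sum_{n\ge2}z^n/(\alpha n^2+n(1-\alpha))$ at $z=\pm r$ is also fine. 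This is the same underlying mechanism as in the original source, which likewise reduces matters to a function of positive real part built from $h$ and $g$ (there via $h+\epsilon g$, $|\epsilon|=1$); your averaging-kernel formulation packages that reduction in a self-contained way and would be a legitimate substitute for the citation.
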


\begin{lemma}\label{lemma2.3}\rm{\cite{liu2019geometric}
Suppose that $f(z) = h + \bar{g}\in \mathcal{G}^{k}_{H}(\alpha)$ for $\alpha \geq 0$ and $k\geq1$ . Then for $n\geq k+1$,
\begin{flalign*}
 \begin{split}
(i)\,\,&|a_n|+|b_n|\leq \frac{2}{1+(n-1)\alpha};\\
(ii)\,\,& ||a_n|-|b_n||\leq \frac{2}{1+(n-1)\alpha};\\
(iii) \,\,&  |a_n| \leq \frac{2}{1+(n-1)\alpha}.
 \end{split}&
\end{flalign*}
All these results are sharp for the function $f$ be given by $f_{k}(z)=z+2\sum\limits_{N\in I} \frac{1}{1+(N-1)\alpha }z^{N}$, $I=\left\{N|N=kj+1, j=1,2,\cdot\cdot\cdot\right\}$. }
\end{lemma}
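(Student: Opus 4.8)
The plan is to reduce the three estimates to the classical Carathéodory coefficient bound for analytic functions with positive real part, applied to a one–parameter family of rotations. Let $f=h+\overline{g}\in\mathcal{G}^{k}_{H}(\alpha)$. By the normalization built into $\mathcal{H}^{0}_{k}$ we may write
\[
h(z)=z+\sum_{n=k+1}^{\infty}a_{n}z^{n},\qquad g(z)=\sum_{n=k+1}^{\infty}b_{n}z^{n},\qquad z\in\mathbb{D}.
\]
Introduce the two analytic functions
\[
p(z)=(1-\alpha)\frac{h(z)}{z}+\alpha h'(z)=1+\sum_{n=k+1}^{\infty}\bigl(1+(n-1)\alpha\bigr)a_{n}z^{\,n-1},
\]
\[
q(z)=(1-\alpha)\frac{g(z)}{z}+\alpha g'(z)=\sum_{n=k+1}^{\infty}\bigl(1+(n-1)\alpha\bigr)b_{n}z^{\,n-1},
\]
both regular at $0$ since $h(0)=g(0)=0$; note $p(0)=1$, $q(0)=0$, and the defining condition of the class says exactly that $\operatorname{Re}p(z)>|q(z)|$ for every $z\in\mathbb{D}$.

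Next I would record the elementary inequality $\operatorname{Re}\bigl(e^{i\theta}q(z)\bigr)\le|q(z)|<\operatorname{Re}p(z)$, valid for all $\theta\in\mathbb{R}$ and $z\in\mathbb{D}$, so that $P_{\theta}:=p-e^{i\theta}q$ is, for each fixed $\theta$, an analytic function on $\mathbb{D}$ with $\operatorname{Re}P_{\theta}>0$ and $P_{\theta}(0)=1$. The Carathéodory lemma then bounds by $2$ in modulus every Taylor coefficient of positive index of $P_{\theta}$; since the coefficient of $z^{\,n-1}$ in $P_{\theta}$ equals $\bigl(1+(n-1)\alpha\bigr)\bigl(a_{n}-e^{i\theta}b_{n}\bigr)$, we obtain the master inequality
\[
\bigl(1+(n-1)\alpha\bigr)\,\bigl|a_{n}-e^{i\theta}b_{n}\bigr|\le 2\qquad(n\ge k+1,\ \theta\in\mathbb{R}).
\]
Because $1+(n-1)\alpha>0$, I am free to optimize over $\theta$: choosing $\theta$ so that $|a_{n}-e^{i\theta}b_{n}|=|a_{n}|+|b_{n}|$ gives (i), choosing $\theta$ so that $|a_{n}-e^{i\theta}b_{n}|=\bigl||a_{n}|-|b_{n}|\bigr|$ gives (ii), and (iii) follows from (i) since $|a_{n}|\le|a_{n}|+|b_{n}|$.

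For sharpness I would take $f_{k}(z)=z+2\sum_{N\in I}\frac{z^{N}}{1+(N-1)\alpha}$ with $I=\{kj+1:j\ge1\}$, so that $g\equiv0$ and $p(z)=1+2\sum_{j\ge1}z^{kj}=\dfrac{1+z^{k}}{1-z^{k}}$, which has positive real part on $\mathbb{D}$, while $|q|\equiv0$; thus $f_{k}\in\mathcal{G}^{k}_{H}(\alpha)$, and for $n=N\in I$ one has $a_{N}=\frac{2}{1+(N-1)\alpha}$, $b_{N}=0$, so all three inequalities hold with equality. There is no deep obstacle here; the only points requiring care are that the rotation bound $\operatorname{Re}(p-e^{i\theta}q)>0$ must hold for all $\theta$ simultaneously — this is what licenses the optimization over $\theta$ — and that the index shift $z^{n}\mapsto z^{\,n-1}$ must be tracked correctly, so that the weight $1+(n-1)\alpha$ is attached to the $n$-th coefficient and not the $(n-1)$-st.
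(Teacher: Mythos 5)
Your argument is correct. Note that the paper does not prove Lemma \ref{lemma2.3} at all --- it is quoted without proof from \cite{liu2019geometric} --- so there is no internal proof to compare against; your route (writing the class condition as $\operatorname{Re}\,p(z)>|q(z)|$ with $p=(1-\alpha)h/z+\alpha h'$ and $q=(1-\alpha)g/z+\alpha g'$, observing that $P_\theta=p-e^{i\theta}q$ has positive real part and $P_\theta(0)=1$, applying the Carath\'eodory coefficient bound, and then optimizing over $\theta$) is the standard argument, and it is exactly the device this paper itself uses in the proof of Theorem \ref{theorem3.1} (there with $\varepsilon$, $|\varepsilon|=1$) to obtain $|a_n|+|b_n|\le 2(1-\beta)$ for the class $\widetilde{G^{0}_{H}}(\beta)$. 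Your bookkeeping is also right: the weight $1+(n-1)\alpha$ is attached to the coefficient of $z^{\,n-1}$ in $P_\theta$, and for the extremal $f_k$ one has $g\equiv 0$ and $p(z)=(1+z^{k})/(1-z^{k})$, which has positive real part, so equality holds at every index $N\in I$, which is precisely the sharpness assertion of the lemma.
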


\begin{lemma}\label{lemma2.4}\rm{\cite{liu2019geometric}
Suppose that $f(z) = h + \bar{g}\in \mathcal{G}^{k}_{H}(\alpha)$ for $\alpha \geq 0$ and $k\geq1$ . Then
\begin{equation*}
r + 2\sum\limits_{j = 1}^\infty \frac{(-1)^{j} r^{kj+1}}{1+kj\alpha} \leq |f(z)|\leq r + 2\sum\limits_{j = 1}^\infty \frac{ r^{kj+1}}{1+kj\alpha}
\end{equation*} holds for $r=|z|$, and the equality holds for  $f_{k}(z)=z+2\sum\limits_{N\in I} \frac{1}{1+(N-1)\alpha }z^{N}$ or its rotations, where $I=\left\{N|N=kj+1, j=1,2,\cdot\cdot\cdot\right\}$.}

\begin{lemma}\label{lemma2.5}
\rm { Suppose $f(z)\in \widetilde{G^{0}_{H}}(\beta)$ for $0\leq \beta<1$. Then
\begin{equation*}
\beta |z|+(1-\beta)\frac{1-|z|}{1+|z|}|z|\leq |f(z)|\leq \beta |z|+(1-\beta)\frac{1+|z|}{1-|z|}|z|.
\end{equation*}} Both inequalities are sharp for $f(z)=z+2(1-\beta)\sum\limits_{n = 2}^\infty z^n$ or its rotations.
\end{lemma}

\begin{proof}
According to the assumption, we have
\begin{align*}
  Re\left(\frac{h(z)}{z} - \beta\right) >\left|\frac{g(z)}{z}\right|.
\end{align*}
Then
\begin{align*}
  Re\left(\frac{h(z)}{z}+\epsilon \frac{g(z)}{z}- \beta\right) >0
\end{align*}
holds for $\forall \, |\epsilon|=1 $ and $\forall \, z\in\mathbb{D}$.

Let $$ G(z)=\frac{\frac{h(z)}{z}+\epsilon \frac{g(z)}{z}- \beta}{1-\beta},$$  $$ R(z)=\frac{1+z}{1-z}.$$
It is obvious that $G(z)$ is an analytic function on $\mathbb{D}$ and $G(0)=1$, $Re(G(z))>0$.  So it holds  that $ G(z)\prec R(z)$. Then there exists an analytic function $\omega(z)$, $\omega(0)=0$, such that
\begin{align*}
 \frac{\frac{h(z)}{z}+\epsilon \frac{g(z)}{z}- \beta}{1-\beta}=\frac{1+\omega(z)}{1-\omega(z)}.
\end{align*}
 Equivalently,
 \begin{align*}
 \frac{h(z)}{z}+\epsilon \frac{g(z)}{z}=\beta +(1-\beta)\frac{1+\omega(z)}{1-\omega(z)}.
\end{align*}

Let $$F(z)=h(z)+\epsilon g(z).$$ Then
\begin{align*}
 \frac{F(z)}{z}=\frac{h(z)}{z}+\epsilon \frac{g(z)}{z}=\beta +(1-\beta)\frac{1+\omega(z)}{1-\omega(z)}.
\end{align*}
It follows that
\begin{align*}
 \left|\frac{F(z)}{z}\right|=&|\beta +(1-\beta)\frac{1+\omega(z)}{1-\omega(z)}|\leq \beta +(1-\beta)(1+2\sum\limits_{n = 1}^\infty  |\omega(z)|^n)\\
 \leq& \beta +(1-\beta)(1+2\sum\limits_{n = 1}^\infty  |z|^n)=\beta +(1-\beta)\frac{1+|z|}{1-|z|}.
\end{align*}
Similarly,
\begin{align*}
 \left|\frac{F(z)}{z}\right|\geq& Re(\beta +(1-\beta)\frac{1+\omega(z)}{1-\omega(z)})= \beta +(1-\beta)Re(1+2\sum\limits_{n = 1}^\infty  |\omega(z)|^n)\\
 \geq& \beta +(1-\beta)(1+2\sum\limits_{n = 1}^\infty (- |z|)^n)=\beta +(1-\beta)\frac{1-|z|}{1+|z|}.
\end{align*}
Then we have
\begin{align*}
  \beta |z|+(1-\beta)\frac{1-|z|}{1+|z|}|z|\leq |F(z)|\leq \beta |z|+(1-\beta)\frac{1+|z|}{1-|z|}|z|,\\
\end{align*}
and the arbitrariness of $\varepsilon$ along with $F(z)=h(z)+\epsilon g(z)$ leads to
\begin{align*}
  \beta |z|+(1-\beta)\frac{1-|z|}{1+|z|}|z|\leq |f(z)|\leq \beta |z|+(1-\beta)\frac{1+|z|}{1-|z|}|z|.
\end{align*}
\end{proof}

\end{lemma}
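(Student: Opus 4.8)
The plan is to reduce this statement about a harmonic $f=h+\overline g$ to a growth estimate for an associated \emph{analytic} function of positive real part, by the now-standard device of replacing $\overline g$ with $\epsilon g$ for a unimodular constant $\epsilon$.

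First I would fix $\epsilon$ with $|\epsilon|=1$ and observe that, since $h(0)=g(0)=0$ and $h'(0)=1$, both $h(z)/z$ and $g(z)/z$ are analytic on $\mathbb D$; the hypothesis $\mathrm{Re}\big(h(z)/z-\beta\big)>|g(z)/z|$ combined with $\mathrm{Re}\big(\epsilon g(z)/z\big)\ge-|g(z)/z|$ forces $\mathrm{Re}\big(h(z)/z+\epsilon g(z)/z-\beta\big)>0$ on $\mathbb D$. Hence
\[
G(z)=\frac{h(z)/z+\epsilon g(z)/z-\beta}{1-\beta}
\]
is analytic with $G(0)=1$ and $\mathrm{Re}\,G>0$, so $G\prec\frac{1+z}{1-z}$; equivalently, $\omega=(G-1)/(G+1)$ is a Schwarz function, and the Schwarz lemma gives $|\omega(z)|\le|z|$. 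Writing $r=|z|$ and $F=h+\epsilon g$, one obtains $F(z)/z=\beta+(1-\beta)\frac{1+\omega(z)}{1-\omega(z)}$.

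Next I would apply the elementary bounds $\frac{1-t}{1+t}\le\mathrm{Re}\frac{1+w}{1-w}\le\big|\frac{1+w}{1-w}\big|\le\frac{1+t}{1-t}$, valid for $|w|\le t<1$ (the image of $\{|w|\le t\}$ under $w\mapsto\frac{1+w}{1-w}$ being a round disk), with $t=|\omega(z)|\le r$, to get
\[
\beta r+(1-\beta)\frac{1-r}{1+r}\,r\ \le\ |F(z)|\ \le\ \beta r+(1-\beta)\frac{1+r}{1-r}\,r ,
\]
which holds for \emph{every} unimodular $\epsilon$. The last step is to transfer this to $f$: for a fixed $z$ (the cases $h(z)=0$ or $g(z)=0$ being immediate), choosing $\epsilon$ so that $\epsilon g(z)$ is a nonnegative multiple of $h(z)$ gives $|f(z)|=|h(z)+\overline{g(z)}|\le|h(z)|+|g(z)|=|F(z)|$, and with the upper estimate above this yields the right-hand inequality of the lemma; choosing $\epsilon$ so that $\epsilon g(z)$ is a nonpositive multiple of $h(z)$ gives $|f(z)|\ge\big||h(z)|-|g(z)|\big|=|F(z)|$, and with the lower estimate this yields the left-hand inequality. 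Finally, for sharpness I would check that $f_0(z)=z+2(1-\beta)\sum_{n=2}^{\infty}z^n$ belongs to $\widetilde{G^{0}_{H}}(\beta)$ — here $g_0\equiv0$ and $h_0(z)/z-\beta=(1-\beta)\frac{1+z}{1-z}$ has positive real part — and that $|f_0(r)|$ meets the upper bound while $|f_0(-r)|$ meets the lower bound, with rotations of $f_0$ supplying the general equality case.

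The one point demanding care is this final transfer from $|F|$ to $|f|$: the unimodular $\epsilon$ achieving the phase alignment depends on $z$, which is permissible precisely because the intermediate two-sided estimate on $|F(z)|$ was established uniformly in $\epsilon$. Everything else is a routine application of the Schwarz lemma and the Carath\'eodory-class growth bounds, so I anticipate no substantive obstacle.
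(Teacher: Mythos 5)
Your proposal is correct and follows essentially the same route as the paper: the same substitution $F=h+\epsilon g$ with unimodular $\epsilon$, subordination of $\bigl(h/z+\epsilon g/z-\beta\bigr)/(1-\beta)$ to $(1+z)/(1-z)$, and the resulting two-sided growth bound on $|F(z)|$. Your only addition is to make the final transfer from $|F(z)|$ to $|f(z)|$ explicit by choosing $\epsilon$ (depending on $z$) to align or oppose the phases of $h(z)$ and $g(z)$, which is a welcome sharpening of the paper's brief appeal to the ``arbitrariness of $\epsilon$'' but not a different argument.
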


\section{ Main results }

\begin{theorem}\label{theorem3.1}
\rm { Suppose $f(z)\in \widetilde{G^{0}_{H}}(\beta)$ for $0 < \beta< 1$, it holds that
\begin{equation*}
|f(z)|+ |z|+\sum_{n=2}^{\infty}(|a_{n}|+|b_n|)|z|^{n} \leq d(f(0),\partial f(\mathbb{D}))
\end{equation*} for $|z|=r\leq r_1$,
where  $r_1$ is the unique root of equation $$(2-4\beta)r^2+(2+\beta)r-1=0$$ in the interval $(0,1)$. The radius $r_1$ is the best possible.}
\end{theorem}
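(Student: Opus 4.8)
\medskip
\noindent\textbf{Proof proposal.}
The plan is to estimate the three quantities on the left of the inequality separately on the circle $|z|=r$, add the estimates, and compare the result with a lower bound for $d(f(0),\partial f(\mathbb D))$; sharpness will come from the extremal function $f_0(z)=z+2(1-\beta)\sum_{n=2}^{\infty}z^n$ of Lemma~\ref{lemma2.5}. Since $f\in\mathcal H_0$ we have $f(0)=0$, so $d(f(0),\partial f(\mathbb D))=d(0,\partial f(\mathbb D))$. Invoking the standard fact that for an open mapping one has $d(f(0),\partial f(\mathbb D))\ge\liminf_{|z|\to1^-}|f(z)-f(0)|$, together with the left-hand inequality of Lemma~\ref{lemma2.5}, I would obtain
\[
d(0,\partial f(\mathbb D))\ \ge\ \liminf_{|z|\to1^-}\Bigl(\beta|z|+(1-\beta)\tfrac{1-|z|}{1+|z|}\,|z|\Bigr)\ =\ \beta .
\]

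Next I would establish the coefficient estimate $|a_n|+|b_n|\le 2(1-\beta)$ for $n\ge2$. Following the proof of Lemma~\ref{lemma2.5}, for every unimodular $\epsilon$ there is a Schwarz function $\omega$ (depending on $\epsilon$) with $\frac{h(z)}{z}+\epsilon\frac{g(z)}{z}=\beta+(1-\beta)\frac{1+\omega(z)}{1-\omega(z)}=1+(1-\beta)\sum_{m\ge1}p_mz^m$, where $1+\sum_{m\ge1}p_mz^m$ has positive real part and hence $|p_m|\le2$ by the Carathéodory inequality. Comparing the coefficient of $z^{\,n-1}$ gives $a_n+\epsilon b_n=(1-\beta)p_{n-1}$, so $|a_n+\epsilon b_n|\le2(1-\beta)$; choosing $\epsilon$ so that $\epsilon b_n$ is a nonnegative multiple of $a_n$ yields the claim. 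Then, for $|z|=r\in(0,1)$, the upper bound of Lemma~\ref{lemma2.5} and a geometric series give
\[
|f(z)|+|z|+\sum_{n=2}^{\infty}(|a_n|+|b_n|)r^n\ \le\ \beta r+(1-\beta)\frac{(1+r)r}{1-r}+r+2(1-\beta)\frac{r^2}{1-r}\ =:\ \Psi(r).
\]

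It remains to see when $\Psi(r)\le\beta$. The function $\Psi$ is continuous and strictly increasing on $[0,1)$, with $\Psi(0)=0$ and $\Psi(r)\to\infty$ as $r\to1^-$, so $\Psi(r)\le\beta$ holds exactly for $r\le r_1$, where $r_1\in(0,1)$ is the unique zero of $\Psi(r)-\beta$. Clearing the denominator, $(1-r)\bigl(\Psi(r)-\beta\bigr)=(2-4\beta)r^2+(2+\beta)r-\beta$, so $r_1$ is the unique root in $(0,1)$ of $(2-4\beta)r^2+(2+\beta)r-\beta=0$ (this is the quadratic in the statement, where the constant term should read $-\beta$); combined with the distance estimate this proves the inequality for $r\le r_1$. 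For sharpness, evaluate at $z=r$ for $f_0$: here $|a_n|+|b_n|=2(1-\beta)$ and $f_0(r)=\beta r+(1-\beta)\tfrac{(1+r)r}{1-r}$, so the left-hand side equals $\Psi(r)$, while $f_0(-1)=-\beta$ forces $d(f_0(0),\partial f_0(\mathbb D))=\beta$; hence equality holds at $r=r_1$ and the left-hand side exceeds $\beta$ for every $r>r_1$, so $r_1$ cannot be enlarged.

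The point I expect to be most delicate is the distance estimate, where the inequality $d(f(0),\partial f(\mathbb D))\ge\liminf_{|z|\to1^-}|f(z)|$ presupposes that $f$ is an open mapping (so that the image boundary is attained only along $\partial\mathbb D$). For $\widetilde{G^0_H}(\beta)$ this should be read off from the representation $\frac{h}{z}+\epsilon\frac{g}{z}=\beta+(1-\beta)\frac{1+\omega}{1-\omega}$, which constrains $h$ and $g$ enough to confirm sense-preservation; the coefficient estimate and the monotonicity analysis of $\Psi$ are then routine.
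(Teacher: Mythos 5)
Your proposal follows essentially the same route as the paper's proof: the coefficient bound $|a_n|+|b_n|\le 2(1-\beta)$ via the Carath\'eodory lemma applied to $\frac{h(z)}{z}+\epsilon\frac{g(z)}{z}$, the two-sided growth estimate of Lemma~\ref{lemma2.5} used both for $|f(z)|$ and for $d(f(0),\partial f(\mathbb{D}))\ge\beta$, the same quadratic $(2-4\beta)r^2+(2+\beta)r-\beta$ (you are right that the constant term $-1$ in the theorem statement is a typo for $-\beta$; the paper's own proof works with $-\beta$), and the same extremal function $z+2(1-\beta)\sum_{n\ge2}z^n$. Your sharpness step, asserting $d(f_0(0),\partial f_0(\mathbb{D}))=\beta$ from the radial limit $f_0(-1)=-\beta$, is exactly the identification $d(f(0),\partial f(\mathbb{D}))=\liminf_{|z|\to1}|f(z)-f(0)|$ that the paper itself invokes without further justification, so your argument is neither weaker nor stronger than the paper's at that point.
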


\begin{proof}
According to the assumption,  $f(z)\in \widetilde{G^{0}_{H}}(\beta)$, we obtain
\begin{align*}
Re\left(\frac{h(z)}{z}-\beta\right)>\left|\frac{g(z)}{z}\right|.
\end{align*}
It follows that
\begin{align*}
Re\left(\frac{h(z)}{z}+\varepsilon \frac{g(z)}{z}-\beta\right)>0
\end{align*}
holds for $\forall \,|\epsilon|=1 $ and $\forall \,z\in\mathbb{D}$. So that there exists an analytic function $p(z)=1+\sum_{n=1}^{\infty}p_n z^n$, $Re(p(z))>0$,
such that
\begin{align}\label{3.1}
\frac{h(z)}{z}+\varepsilon \frac{g(z)}{z}-\beta=(1-\beta)p(z).
\end{align}
Comparing the  coefficients of both sides of the  inequality (\ref{3.1}), we obtain that
\begin{align}\label{3.2}
 a_n+\varepsilon b_n=(1-\beta)p_{n-1}
\end{align} for  $n\geq2$.
By the $Carath\acute{e}odory$ $ Lemma$, $|p_n|\leq 2$ holds for $n\geq1$. By the arbitrariness of $\varepsilon$, we have
 \begin{align}\label{3.3}
 |a_n|+|b_n|\leq 2(1-\beta).
 \end{align}

Using  inequality (\ref{3.3})and Lemma \ref{lemma2.5}, we have
\begin{align*}
|f(z)|+|z|+\sum_{n=2}^{\infty}(|a_{n}|+|b_n|)|z|^{n}
\leq& \beta r+ (1-\beta)(\frac{1+r}{1-r})r+r+2(1-\beta)\sum\limits_{n = 2}^\infty r^n\\
=&\frac{(2-4\beta)r^2+2r}{1-r}.
\end{align*}
Noting  that $f(0)=0$, and using the Lemma \ref{lemma2.5}, we have
\begin{align*}
 d(f(0),\partial f(\mathbb{D})) =\liminf_{|z|\rightarrow1}|f(z)-f(0)|\geq \beta.
\end{align*}
Now, we need to show that $$\frac{(2-4\beta)r^2+2r}{1-r}\leq \beta$$ holds for $r\leq r_{1}$. It is equivalent to show that $(2-4\beta)r^2+(2+\beta)r-\beta\leq0 $ holds for $r\leq r_{1}$.

Let $$Q_1 (r)=(2-4\beta)r^2+(2+\beta)r-\beta.$$
Observe that $Q_1 (0)=-\beta<0$ and $Q_1 (1)=4-4\beta>0$. Thus, by the continuity and monotonicity of $Q_1 (r)$, there exists a unique root $r_1\in(0,1)$ such that $Q_1 (r_1)=0$. When
 $ r\leq r_1$, $Q_1 (r)\leq 0$ holds, then
 \begin{align*}
|f(z)|+|z|+\sum_{n=2}^{\infty}(|a_{n}|+|b_n|)|z|^{n}
\leq \frac{(2-4\beta)r^2+2r}{1-r} \leq \beta.
\end{align*}

To show the sharpness of the number $r_1$, we consider the function
\begin{equation*}
f(z) =z+ 2(1-\beta)\sum_{n = 2}^\infty z^n.
\end{equation*}
 Taking $|z|=r$, we have
\begin{align*}
|f(z)|+|z|+\sum_{n=2}^{\infty}(|a_{n}|+|b_n|)|z|^{n}=&2r+4(1-\beta)\sum_{n=2}^{\infty}r^n\\
=& \frac{(2-4\beta)r^2+2r}{1-r}.
\end{align*}
According to the above proof,  when $r=r_1$, $$\frac{(2-4\beta)r^2+2r}{1-r}=\beta=d(f(0),\partial f(\mathbb{D})).$$\\
This proves the sharpness of $ r_1$.
\end{proof}

\begin{theorem}\rm
\label{theorem3.2} Suppose that $f=h+\overline{g}\in \widetilde{G^{0}_{H}}(\beta)$ for $0< \beta<1$. Let $S_{r}$ denotes the area of the image of the subdisk $|z|<r$ under the mapping $f$. Then
\begin{equation*}
|z|+\sum_{n=2}^{\infty}(|a_{n}|+|b_{n}|)|z|^{n}+  \frac{S_{r}}{\pi}  \leq d(f(0),\partial f(\mathbb{D}))
\end{equation*} holds for $|z|=r\leq r_{2}$,
where $r_{2}$ is the unique  root of the equation $$r+2(1-\beta)\frac{r^{2}}{1-r}+r^{2}+4(1-\beta)^{2}\frac{(2-r^{2})r^{4}}{(1-r^{2})^{2}}-\beta=0$$ in the interval $(0,1)$.
Furthermore, the radius $r_{2}$ is the best possible.
\end{theorem}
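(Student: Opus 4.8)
The plan is to run the proof of Theorem \ref{theorem3.1} again, with the term $S_r/\pi$ now playing the role that $|f(z)|$ played there. As in that proof, for $f=h+\bar g\in\widetilde{G^0_H}(\beta)$ the hypothesis gives $Re\bigl(h(z)/z+\varepsilon g(z)/z-\beta\bigr)>0$ for every $|\varepsilon|=1$, hence an analytic $p$ with $Re\,p>0$, $p(0)=1$ and $a_n+\varepsilon b_n=(1-\beta)p_{n-1}$ for $n\ge2$; the Carath\'eodory bound $|p_{n-1}|\le2$ together with the arbitrariness of $\varepsilon$ yields
$$|a_n|+|b_n|\le 2(1-\beta)\qquad(n\ge2).$$
In particular $|z|+\sum_{n\ge2}(|a_n|+|b_n|)|z|^n\le r+2(1-\beta)\frac{r^2}{1-r}$, and — since $f(0)=0$ — Lemma \ref{lemma2.5} gives, exactly as in Theorem \ref{theorem3.1}, $d(f(0),\partial f(\mathbb D))\ge\beta$.

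The new ingredient is the area estimate. Writing $h(z)=z+\sum_{n\ge2}a_nz^n$, $g(z)=\sum_{n\ge2}b_nz^n$ (so $a_1=1$, $b_1=0$ since $f\in\mathcal H_0$), the area formula for harmonic mappings gives
$$\frac{S_r}{\pi}\le\frac1\pi\iint_{|z|<r}\bigl(|h'(z)|^2-|g'(z)|^2\bigr)\,dA=r^2+\sum_{n=2}^{\infty}n\bigl(|a_n|^2-|b_n|^2\bigr)r^{2n},$$
the first inequality being the harmonic counterpart of the bound $S_r\le\pi\sum n|a_n|^2r^{2n}$ used for Theorem B. Since $|a_n|^2-|b_n|^2=(|a_n|-|b_n|)(|a_n|+|b_n|)\le(|a_n|+|b_n|)^2\le4(1-\beta)^2$ and $\sum_{n\ge2}nx^n=x^2(2-x)/(1-x)^2$, putting $x=r^2$ gives $S_r/\pi\le r^2+4(1-\beta)^2(2-r^2)r^4/(1-r^2)^2$. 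Adding this to the bound on the linear part, it suffices to show
$$Q_2(r):=r+2(1-\beta)\frac{r^2}{1-r}+r^2+4(1-\beta)^2\frac{(2-r^2)r^4}{(1-r^2)^2}-\beta\le0\qquad\text{for }0<r\le r_2 .$$
Here $Q_2(0)=-\beta<0$, $Q_2(r)\to+\infty$ as $r\to1^-$, and $Q_2(r)+\beta$ is a sum of functions strictly increasing on $(0,1)$ (the last being a positive multiple of $\sum_{n\ge2}nr^{2n}$); hence $Q_2$ has a unique zero $r_2\in(0,1)$ and $Q_2\le0$ exactly on $(0,r_2]$, which is the claimed inequality.

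For the sharpness of $r_2$ I would take $f(z)=z+2(1-\beta)\sum_{n\ge2}z^n$ and its rotations. Then $b_n=0$ and $|a_n|=2(1-\beta)$, so $|a_n|+|b_n|=2(1-\beta)$ and $|a_n|^2-|b_n|^2=(|a_n|+|b_n|)^2$; thus every inequality in the chain above becomes an equality, and $d(f(0),\partial f(\mathbb D))=\beta$ exactly as recorded in the proof of Theorem \ref{theorem3.1}. Consequently the left-hand side at $|z|=r$ equals $Q_2(r)+\beta$, which is $\beta=d(f(0),\partial f(\mathbb D))$ at $r=r_2$ and strictly larger for $r>r_2$, so $r_2$ cannot be improved.

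The coefficient bound and the elementary analysis of $Q_2$ are routine (the former already appears in the proof of Theorem \ref{theorem3.1}). The one step I expect to need genuine care is the area estimate: one must justify $S_r\le\iint_{|z|<r}(|h'|^2-|g'|^2)\,dA$ — immediate once the functions of $\widetilde{G^0_H}(\beta)$ are known to be sense-preserving, since then the right-hand side is the area of $f(\{|z|<r\})$ counted with multiplicity — and then check that the extremal function above realizes equality in it; everything else is a direct transcription of the Theorem \ref{theorem3.1} argument.
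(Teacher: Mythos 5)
Your proposal is correct and follows essentially the same route as the paper: the coefficient bound $|a_n|+|b_n|\le 2(1-\beta)$ from Theorem \ref{theorem3.1}, the Jacobian-integral estimate $S_r/\pi\le r^2+4(1-\beta)^2(2-r^2)r^4/(1-r^2)^2$, the lower bound $d(f(0),\partial f(\mathbb{D}))\ge\beta$ via Lemma \ref{lemma2.5}, monotonicity of $Q_2$, and sharpness via $f(z)=z+2(1-\beta)\sum_{n\ge2}z^n$. The only differences are cosmetic (you argue monotonicity termwise rather than computing $Q_2'$, and you are if anything slightly more careful than the paper about $S_r$ versus the multiplicity-counted area $\frac1\pi\iint_{|z|<r}J_f\,dA$, which the paper simply takes as the definition of $S_r$).
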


\begin{proof}

According to the assumption, we have
\begin{align*}
\int\int_{\mathbb{D}_{r}}|h'(z)|^{2}dxdy=\int_{0}^{r}\int_{0}^{2\pi}|h'(Re^{i\theta})|^{2}Rd\theta dR=\pi r^{2}+\pi\sum_{n=2}^{\infty}n|a_{n}|^{2}r^{2n}
\end{align*}
and
\begin{align*}
\int\int_{\mathbb{D}_{r}}|g'(z)|^{2}dxdy=\int_{0}^{r}\int_{0}^{2\pi}|g'(Re^{i\theta})|^{2}Rd\theta dR=\pi\sum_{n=2}^{\infty}n|b_{n}|^{2}r^{2n}.
\end{align*}
The definition of $S_r$ implies that
\begin{align*}
\frac{S_{r}}{\pi}=&\frac{1}{\pi}\int\int_{\mathbb{D}_{r}} J_{f}(z)dxdy=\frac{1}{\pi}\int\int_{\mathbb{D}_{r}}(|h'(z)|^{2}-|g'(z)|^{2})dxdy\\
=&r^{2}+\sum_{n=2}^{\infty}n(|a_{n}|^{2}-|b_{n}|^{2})r^{2n}=r^{2}+\sum_{n=2}^{\infty}n(|a_{n}|+|b_{n}|)(|a_{n}|-|b_{n}|)r^{2n}.\\
\end{align*}
By the inequalities (\ref{3.2}) in the Theorem 3.1, we obtain
\begin{align*}
|a_n|+|b_n|\leq 2(1-\beta) \quad and \quad \left||a_n|-|b_n|\right|\leq 2(1-\beta).
\end{align*}
Then
\begin{align}\label{3.4}
\frac{S_{r}}{\pi}=&r^{2}+\sum_{n=2}^{\infty}n(|a_{n}|+|b_{n}|)(|a_{n}|-|b_{n}|)r^{2n} \notag\\
\leq&r^{2}+4(1-\beta)^{2}\sum_{n=2}^{\infty}nr^{2n}\notag\\
=&r^{2}+4(1-\beta)^{2}\frac{(2-r^{2})r^{4}}{(1-r^{2})^{2}}.
\end{align}
It follows from inequalities (\ref{3.3}) and (\ref{3.4}) that
\begin{align*}
|z|+\sum_{n=2}^{\infty}(|a_{n}|+|b_{n}|)|z|^{n}+\frac{S_{r}}{\pi} \leq r+2(1-\beta)\frac{r^{2}}{1-r}+r^{2}+4(1-\beta)^{2}\frac{(2-r^{2})r^{4}}{(1-r^{2})^{2}}.
\end{align*}
Noting  that $f(0)=0$, and using the Lemma \ref{lemma2.5}, we have
\begin{align*}
 d(f(0),\partial f(\mathbb{D})) =\liminf_{|z|\rightarrow1}|f(z)-f(0)|\geq \beta.
\end{align*}
Now, it is sufficient  to show that $$r+2(1-\beta)\frac{r^{2}}{1-r}+r^{2}+4(1-\beta)^{2}\frac{(2-r^{2})r^{4}}{(1-r^{2})^{2}}\leq\beta$$
holds for $r\leq r_{2}$.

Let $$Q_{2}(r)=r+2(1-\beta)\frac{r^{2}}{1-r}+r^{2}+4(1-\beta)^{2}\frac{(2-r^{2})r^{4}}{(1-r^{2})^{2}}-\beta.$$
Direct computations lead to
\begin{align*}
Q_{2}'(r)=1+2(1-\beta)\frac{(2-r)r}{(1-r)^{2}}+2r+8(1-\beta)^{2}\frac{(4-3r^2+r^{4})r^{3}}{(1-r^{2})^{3}}>0,
\end{align*}
observe that $Q_{2}(0)=-\beta<0$ and $lim_{r\rightarrow1^{-}}Q_{2}(r)>0$, so that there exists a unique number $r_{2}\in (0,1)$ such that $Q_{2}(r_{2})=0$.
Thus, $Q_{2}(r)\leq0$ holds for $r\leq r_{2}$.

Next, to show the radius $r_{2}$ is sharp. Let $$f(z)=z+2(1-\beta)\sum_{n=2}^{\infty}z^{n}.$$
Taking $|z|=r$. According to the above proof, we obtain
\begin{align*}
&|z|+\sum_{n=2}^{\infty}(|a_{k}|+|b_{k}|)|z|^{n}+\frac{S_{r}}{\pi}\\
=&r+2(1-\beta)\frac{r^{2}}{1-r}+r^{2}+4(1-\beta)^{2}\frac{(2-r^{2})r^{4}}{(1-r^{2})^{2}}\\
=&\beta=d(f(0), \partial f(\mathbb{D}))
\end{align*} holds for $|z|=r_{2}$. Thus, the radius $r_{2}$ is sharp.
\end{proof}

\begin{theorem}\label{theorem3.3}
\rm {Suppose $f(z)\in {W^{0}_{H}}(\alpha)$ for $0\leq \alpha<1$, it holds that
\begin{equation*}
|f(z)|+ |z|+\sum_{n=2}^{\infty}(|a_{n}|+|b_n|)|z|^{n} \leq d(f(0),\partial f(\mathbb{D}))
\end{equation*} for $|z|=r\leq r_3$,
where  $r_3$ is the unique root of equation $$ 4\sum_{n=2}^{\infty} \frac{r^n}{\alpha n^2+n(1-\alpha)}+2r-1-2\sum_{n=2}^{\infty} \frac{(-1)^{n-1}}{\alpha n^2+n(1-\alpha)}=0$$ in the interval $(0,1)$.
The radius $r_3$ is the best possible.}
\end{theorem}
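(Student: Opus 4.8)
The plan is to mirror the structure of the proof of Theorem \ref{theorem3.1}, but now using the sharp coefficient bounds and distortion estimates available for $W^0_H(\alpha)$, namely Lemma \ref{lemma2.1}(i) and Lemma \ref{lemma2.2}. First I would invoke Lemma \ref{lemma2.2} to get the upper bound
\begin{align*}
|f(z)| \leq r + 2\sum_{n=2}^{\infty}\frac{r^n}{\alpha n^2 + n(1-\alpha)},
\end{align*}
and Lemma \ref{lemma2.1}(i) to get $|a_n|+|b_n| \leq \frac{2}{\alpha n^2+n(1-\alpha)}$ for $n\geq 2$. Substituting these into the left-hand side of the claimed inequality yields
\begin{align*}
|f(z)| + |z| + \sum_{n=2}^{\infty}(|a_n|+|b_n|)|z|^n \leq 2r + 4\sum_{n=2}^{\infty}\frac{r^n}{\alpha n^2+n(1-\alpha)}.
\end{align*}

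Next I would estimate $d(f(0),\partial f(\mathbb{D}))$ from below. Since $h(0)=0$ and $g(0)=0$ we have $f(0)=0$, so $d(f(0),\partial f(\mathbb{D})) = \liminf_{|z|\to 1}|f(z)|$, and applying the \emph{lower} distortion bound in Lemma \ref{lemma2.2} gives
\begin{align*}
d(f(0),\partial f(\mathbb{D})) \geq 1 + 2\sum_{n=2}^{\infty}\frac{(-1)^{n-1}}{\alpha n^2+n(1-\alpha)}.
\end{align*}
(One should note this series converges and the right side is positive, since the $n=2$ term dominates; this is the analogue of the constant $\beta$ appearing in Theorem \ref{theorem3.1}.) It then suffices to show
\begin{align*}
2r + 4\sum_{n=2}^{\infty}\frac{r^n}{\alpha n^2+n(1-\alpha)} \leq 1 + 2\sum_{n=2}^{\infty}\frac{(-1)^{n-1}}{\alpha n^2+n(1-\alpha)}
\end{align*}
for $r \leq r_3$. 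Define $Q_3(r)$ to be the difference (left minus right); then $Q_3$ is continuous and strictly increasing on $(0,1)$ (every coefficient of $r^n$ is positive), $Q_3(0) = -\bigl(1 + 2\sum_{n\geq 2}\frac{(-1)^{n-1}}{\alpha n^2+n(1-\alpha)}\bigr) < 0$, and as $r\to 1^-$ the series $\sum \frac{r^n}{\alpha n^2+n(1-\alpha)}$ tends to $\sum \frac{1}{\alpha n^2+n(1-\alpha)}$, which exceeds $\sum \frac{(-1)^{n-1}}{\alpha n^2+n(1-\alpha)}$, so $Q_3(1^-) > 0$. Hence there is a unique root $r_3 \in (0,1)$, and $Q_3(r) \leq 0$ precisely for $r \leq r_3$, which is the asserted equation after rearrangement.

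For sharpness I would test the extremal function $f(z) = z + 2\sum_{n=2}^{\infty}\frac{1}{\alpha n^2+n(1-\alpha)}z^n$ from Lemmas \ref{lemma2.1}–\ref{lemma2.2}. For this $f$, taking $z = -r$ (so that $|f(-r)| = r - 2\sum_{n\geq 2}\frac{(-1)^n r^n}{\alpha n^2+n(1-\alpha)} = r + 2\sum_{n\geq2}\frac{(-1)^{n-1}r^n}{\alpha n^2+n(1-\alpha)}$ attains the lower distortion bound), I need to check that the left-hand side equals $2r + 4\sum_{n\geq2}\frac{r^n}{\alpha n^2+n(1-\alpha)}$ — here $|a_n|+|b_n| = \frac{2}{\alpha n^2+n(1-\alpha)}$ exactly and $|f(-r)|$ plus $r$ gives $2r$ plus the alternating series, which combine to reproduce $Q_3(r) + d(f(0),\partial f(\mathbb{D}))$; and that $d(f(0),\partial f(\mathbb{D})) = 1 + 2\sum_{n\geq2}\frac{(-1)^{n-1}}{\alpha n^2+n(1-\alpha)}$ for this $f$, which follows because the minimum modulus on $|z|=r$ is attained at $z=-r$ and the liminf as $r\to1$ gives equality. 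At $r = r_3$ the inequality $Q_3(r_3)=0$ becomes equality, so no larger radius works. The main obstacle I anticipate is the bookkeeping around the distance $d(f(0),\partial f(\mathbb{D}))$: one must be careful that the lower distortion estimate in Lemma \ref{lemma2.2} genuinely controls the liminf of $|f(z)|$ (it does, being a pointwise bound valid for all $z$), and that the extremal function's boundary distance is computed along the correct ray $z=-r$ rather than $z=r$; matching the alternating-sign series on both the estimate side and the sharpness side is where sign errors are most likely to creep in.
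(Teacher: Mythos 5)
Your estimate portion coincides with the paper's own argument: Lemma \ref{lemma2.2} (upper bound) plus Lemma \ref{lemma2.1}(i) give $|f(z)|+|z|+\sum_{n=2}^{\infty}(|a_n|+|b_n|)|z|^n\leq 2r+4\sum_{n=2}^{\infty}\frac{r^n}{\alpha n^2+n(1-\alpha)}$, the lower distortion bound gives $d(f(0),\partial f(\mathbb{D}))\geq 1+2\sum_{n=2}^{\infty}\frac{(-1)^{n-1}}{\alpha n^2+n(1-\alpha)}$, and the monotonicity of $Q_3$ together with $Q_3(0)<0$ and $Q_3(1^-)>0$ produces the unique radius $r_3$; this is exactly what the paper does (your positivity remark ``the $n=2$ term dominates'' is loose --- the correct quick reason is the alternating-series bound $\bigl|\sum_{n\geq2}\tfrac{(-1)^{n-1}}{\alpha n^2+n(1-\alpha)}\bigr|\leq\tfrac{1}{2(1+\alpha)}<\tfrac12$ --- but that is a minor point the paper itself glosses over).

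The sharpness paragraph, however, contains a step that fails as written. For the extremal function you evaluate the left-hand side at $z=-r$; there $|f(-r)|=r+2\sum_{n\geq2}\frac{(-1)^{n-1}r^n}{\alpha n^2+n(1-\alpha)}$, so the left-hand side equals $2r+2\sum_{n\geq2}\frac{(-1)^{n-1}r^n}{\alpha n^2+n(1-\alpha)}+2\sum_{n\geq2}\frac{r^n}{\alpha n^2+n(1-\alpha)}$, which for $r>0$ is \emph{strictly smaller} than $2r+4\sum_{n\geq2}\frac{r^n}{\alpha n^2+n(1-\alpha)}=Q_3(r)+d(f(0),\partial f(\mathbb{D}))$; it does not ``reproduce $Q_3(r)+d$'', and in particular equality at $r=r_3$ is not attained along $z=-r$. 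The two roles must be kept separate: the left-hand side must be evaluated at $z=r$ on the positive real axis, where $|f(r)|$ attains the \emph{upper} distortion bound, so that it equals $2r+4\sum_{n\geq2}\frac{r^n}{\alpha n^2+n(1-\alpha)}$ and hence equals $d(f(0),\partial f(\mathbb{D}))$ at $r=r_3$ (this is the paper's choice); the ray $z=-r$, $r\to1^-$, is used only to identify $d(f(0),\partial f(\mathbb{D}))=1+2\sum_{n\geq2}\frac{(-1)^{n-1}}{\alpha n^2+n(1-\alpha)}$ for this $f$ --- a point you do handle correctly and which the paper leaves implicit. With that correction your argument matches the paper's proof.
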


\begin{proof}
According to assumption, $f(0)=0$ and Lemma \ref{lemma2.2}, we have
\begin{align*}
d(f(0),\partial f(\mathbb{D}))=\liminf_{|z|\rightarrow1} |f(z)-f(0)|\geq 1+2\sum_{n=2}^{\infty}\frac{(-1)^{n-1}}{\alpha n^2+n(1-\alpha)}.
\end{align*}
It is obvious that $\sum_{n=2}^{\infty}\frac{(-1)^{n-1}}{n(n\alpha+1-\alpha)}$ is convergent and $1+2\sum_{n=2}^{\infty}\frac{(-1)^{n-1}}{n(n\alpha+1-\alpha)}$ for $0\leq \alpha<1$. Then Lemma \ref{lemma2.1} and \ref{lemma2.2} imply that
\begin{align*}
|f(z)|+|z|+\sum_{n=2}^{\infty}(|a_{n}|+|b_n|)|z|^{n}
\leq& r+ 2\sum_{n=2}^{\infty}\frac{r^n}{\alpha n^2+n(1-\alpha)}+r+ 2\sum_{n=2}^{\infty}\frac{r^n}{\alpha n^2+n(1-\alpha)}\\
 =& 2r+ 4\sum_{n=2}^{\infty}\frac{r^n}{\alpha n^2+n(1-\alpha)}.
\end{align*}
Now, it is sufficient to show that $$2r+ 4\sum_{n=2}^{\infty}\frac{r^n}{\alpha n^2+n(1-\alpha)}\leq 1+2\sum_{n=2}^{\infty}\frac{(-1)^{n-1}}{\alpha n^2+n(1-\alpha)}$$ holds for $r\leq r_{3}$. It is equivalent to $Q_{3}(r)\leq0 $ for $r\leq r_{3}$, where
$$ Q_{3}(r)=2r+ 4\sum_{n=2}^{\infty}\frac{r^n}{\alpha n^2+n(1-\alpha)}-1-2\sum_{n=2}^{\infty}\frac{(-1)^{n-1}}{\alpha n^2+n(1-\alpha)}.$$
Observe that
 $$Q_{3}(0)=-1-2\sum_{n=2}^{\infty}\frac{(-1)^{n-1}}{\alpha n^2+n(1-\alpha)}<0, $$
 $$Q_{3}(1)=1+2\sum_{n=2}^{\infty}\frac{1}{\alpha n^2+n(1-\alpha)}+2\sum_{n=1}^{\infty}\frac{1}{2\alpha n^2+n(1-\alpha)} > 0,$$ and
 \begin{align}\label{3.6}
Q_{3}'(r)=2+4\sum_{n=2}^{\infty}\frac{r^{n-1}}{\alpha n+1-\alpha}>0.
\end{align}Thus,  there exists a unique $r_{3} \in (0,1)$ such that $Q_{3}(r_{3}) = 0$ and $Q_{3}(r)\leq0 $
holds for $r\leq r_{3}$.

Next, to show the radius $r_3$ is sharp. Let $$f(z)=z+2\sum_{n=2}^{\infty}\frac{z^{n}}{\alpha n^2+n(1-\alpha)}.$$
Taking $|z|=r$, then
\begin{align*}
|f(z)|+|z|+\sum_{n=2}^{\infty}(|a_{n}|+|b_n|)|z|^{n}
= 2r+ 4\sum_{n=2}^{\infty}\frac{r^n}{\alpha n^2+n(1-\alpha)}.
\end{align*}
Observe that if $r=r_3$, it follows that
$$2r+ 4\sum_{n=2}^{\infty}\frac{r^n}{\alpha n^2+n(1-\alpha)}=1+2\sum_{n=2}^{\infty}\frac{(-1)^{n-1}}{\alpha n^2+n(1-\alpha)}=d(f(0),\partial f(\mathbb{D})) $$
This proves the sharpness of $r_3$ and proof of Theorem 3.3 is complete.

\end{proof}

\begin{theorem}\rm
\label{theorem 3.4} Suppose that $f=h+\overline{g}\in W^{0}_{H}(\alpha)$ for $0\leq \alpha<1$. Let $S_{r}$ denotes the area of the image of the subdisk $|z|<r$ under the mapping $f$. Then
\begin{equation*}
|z|+\sum_{n=2}^{\infty}(|a_{n}|+|b_{n}|)|z|^{n}+\frac{S_{r}}{\pi} \leq d(f(0), \partial f(\mathbb{D}))
\end{equation*} for $|z|=r\leq r_{4}$,
where $r_{4}$ is the unique root of the equation $$r+r^{2}+2\sum_{n=2}^{\infty}\frac{(n\alpha+1-\alpha)r^{n}+2r^{2n}}{n(n\alpha+1-\alpha)^{2}}
-1-2\sum_{n=2}^{\infty}\frac{(-1)^{n-1}}{n(n\alpha+1-\alpha)}=0$$ in  the interval $(0,1)$.
Furthermore, the radius $r_{4}$ is the best possible.
\end{theorem}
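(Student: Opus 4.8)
The plan is to follow the template of Theorem \ref{theorem3.2}, replacing the class $\widetilde{G^{0}_{H}}(\beta)$ by $W^{0}_{H}(\alpha)$ and the coefficient estimates of Theorem \ref{theorem3.1} by those of Lemma \ref{lemma2.1}, while the boundary distance is handled via Lemma \ref{lemma2.2} exactly as in Theorem \ref{theorem3.3}. Since $f(0)=0$, Lemma \ref{lemma2.2} gives
\begin{align*}
d(f(0),\partial f(\mathbb{D}))=\liminf_{|z|\to 1}|f(z)-f(0)|\geq 1+2\sum_{n=2}^{\infty}\frac{(-1)^{n-1}}{n(n\alpha+1-\alpha)},
\end{align*}
and, as observed in Theorem \ref{theorem3.3}, the right-hand side is a positive number for $0\le\alpha<1$; it therefore suffices to bound the left-hand side of the asserted inequality by this quantity.

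Next I would compute the area term as in Theorem \ref{theorem3.2}: writing $J_f=|h'|^2-|g'|^2$ and integrating in polar coordinates,
\begin{align*}
\frac{S_r}{\pi}=r^2+\sum_{n=2}^{\infty}n\bigl(|a_n|^2-|b_n|^2\bigr)r^{2n}=r^2+\sum_{n=2}^{\infty}n(|a_n|+|b_n|)(|a_n|-|b_n|)r^{2n}.
\end{align*}
Now apply Lemma \ref{lemma2.1}: from $(i)$ and $(ii)$ one has $|a_n|+|b_n|\le\frac{2}{\alpha n^2+n(1-\alpha)}$ and $\bigl||a_n|-|b_n|\bigr|\le\frac{2}{\alpha n^2+n(1-\alpha)}$, hence, using $\alpha n^2+n(1-\alpha)=n(n\alpha+1-\alpha)$, $n(|a_n|^2-|b_n|^2)\le n(|a_n|+|b_n|)\bigl||a_n|-|b_n|\bigr|\le\frac{4}{n(n\alpha+1-\alpha)^2}$. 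Combining this with the bound $\sum_{n=2}^{\infty}(|a_n|+|b_n|)r^n\le 2\sum_{n=2}^{\infty}\frac{r^n}{n(n\alpha+1-\alpha)}$ from Lemma \ref{lemma2.1}$(i)$ and merging the two resulting series over the common denominator $n(n\alpha+1-\alpha)^2$ yields
\begin{align*}
|z|+\sum_{n=2}^{\infty}(|a_n|+|b_n|)|z|^n+\frac{S_r}{\pi}\le r+r^2+2\sum_{n=2}^{\infty}\frac{(n\alpha+1-\alpha)r^n+2r^{2n}}{n(n\alpha+1-\alpha)^2}.
\end{align*}

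It then remains to show that the function
\begin{align*}
Q_4(r)=r+r^2+2\sum_{n=2}^{\infty}\frac{(n\alpha+1-\alpha)r^n+2r^{2n}}{n(n\alpha+1-\alpha)^2}-1-2\sum_{n=2}^{\infty}\frac{(-1)^{n-1}}{n(n\alpha+1-\alpha)}
\end{align*}
is nonpositive for $r\le r_4$. For this I would check, as in the previous theorems, that $Q_4(0)=-\bigl(1+2\sum_{n=2}^{\infty}\frac{(-1)^{n-1}}{n(n\alpha+1-\alpha)}\bigr)<0$, that $\lim_{r\to 1^-}Q_4(r)>0$ (the two convergent series evaluated at $r=1$, together with the constant $1$, dominate the alternating series), and that $Q_4'(r)=1+2r+2\sum_{n=2}^{\infty}\frac{(n\alpha+1-\alpha)r^{n-1}+4r^{2n-1}}{(n\alpha+1-\alpha)^2}>0$ on $(0,1)$; continuity and strict monotonicity then give a unique root $r_4\in(0,1)$ with $Q_4(r)\le 0$ for $r\le r_4$. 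Finally, sharpness is obtained with the extremal map $f(z)=z+2\sum_{n=2}^{\infty}\frac{z^n}{\alpha n^2+n(1-\alpha)}$ of Lemmas \ref{lemma2.1}--\ref{lemma2.2}: for it $b_n=0$ and $a_n=\frac{2}{\alpha n^2+n(1-\alpha)}$, so every inequality above is an equality, $d(f(0),\partial f(\mathbb{D}))=1+2\sum_{n=2}^{\infty}\frac{(-1)^{n-1}}{n(n\alpha+1-\alpha)}$ is attained along the negative real axis, and at $r=r_4$ the left-hand side equals exactly $d(f(0),\partial f(\mathbb{D}))$.

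The main obstacle is essentially bookkeeping: correctly combining the two coefficient estimates of Lemma \ref{lemma2.1} to control $n(|a_n|^2-|b_n|^2)$ via $|a_n|^2-|b_n|^2\le(|a_n|+|b_n|)\,\bigl||a_n|-|b_n|\bigr|$ (which is automatic when $|a_n|<|b_n|$), reorganizing the resulting series into the single denominator displayed in the statement, and verifying termwise that $Q_4'>0$ and $\lim_{r\to 1^-}Q_4(r)>0$; no genuinely new idea beyond Theorems \ref{theorem3.2} and \ref{theorem3.3} is needed.
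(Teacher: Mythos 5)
Your proposal is correct and follows essentially the same route as the paper: the distance bound from Lemma \ref{lemma2.2}, the area estimate via Lemma \ref{lemma2.1}$(i)$--$(ii)$ giving $n(|a_n|^2-|b_n|^2)\le 4/\bigl(n(n\alpha+1-\alpha)^2\bigr)$, the monotone function $Q_4$ with $Q_4(0)<0$ and $Q_4(1)>0$, and sharpness via $f(z)=z+2\sum_{n\ge2}z^n/\bigl(n(n\alpha+1-\alpha)\bigr)$. The only cosmetic difference is that you merge the two series over a common denominator as in the theorem's displayed equation, while the paper keeps them separate; the content is identical.
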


\begin{proof}
According to the assumption, $f(0)=0$ and Lemma \ref{lemma2.2}, we have
\begin{align*}
d(f(0),\partial f(\mathbb{D}))=\liminf_{|z|\rightarrow1}|f(z)-f(0)|\geq 1+2\sum_{n=2}^{\infty}\frac{(-1)^{n-1}}{n(n\alpha+1-\alpha)}.
\end{align*}
It is obvious that $\sum_{n=2}^{\infty}\frac{(-1)^{n-1}}{n(n\alpha+1-\alpha)}$ is convergent and $1+2\sum_{n=2}^{\infty}\frac{(-1)^{n-1}}{n(n\alpha+1-\alpha)}$ for $0\leq \alpha<1$. At the same time,  by the Lemma \ref{lemma2.1} $(i)$ and $(ii)$, we have
\begin{align*}
\frac{S_{r}}{\pi}=&\frac{1}{\pi}\int\int_{\mathbb{D}_{r}}(|h'(z)|^{2}-|g'(z)|^{2})dxdy\\
=&r^{2}+\sum_{n=2}^{\infty}n(|a_{n}|^{2}-|b_{n}|^{2})r^{2n}\\
\leq&r^{2}+4\sum_{n=2}^{\infty}\frac{r^{2n}}{n(n\alpha+1-\alpha)^{2}}.
\end{align*}
Thus, using Lemma \ref{lemma2.1} $(i)$ again, we have \begin{align*}
|z|+\sum_{n=2}^{\infty}(|a_{n}|+|b_{n}|)|z|^{n}+ \frac{S_{r}}{\pi} \leq r&+2\sum_{n=2}^{\infty}\frac{r^{n}}{n(n\alpha+1-\alpha)}\\
+&r^{2}+4\sum_{n=2}^{\infty}\frac{r^{2n}}{n(n\alpha+1-\alpha)^{2}}.
\end{align*}
Now, we need to show that $Q_{4}(r)\leq 0$ holds for $r\leq r_{4}$, where
\begin{align*}
  Q_{4}(r)=&r+2\sum_{n=2}^{\infty}\frac{r^{n}}{n(n\alpha+1-\alpha)}+r^{2}+4\sum_{n=2}^{\infty}\frac{r^{2n}}{n(n\alpha+1-\alpha)^{2}}\\
-&1-2\sum_{n=2}^{\infty}\frac{(-1)^{n-1}}{n(n\alpha+1-\alpha)}.
\end{align*}
Direct computations lead to
 $$Q_{4}(0)=-1-2\sum_{n=2}^{\infty}\frac{(-1)^{n-1}}{n(n\alpha+1-\alpha)}<0,$$
$$Q_{4}(1)=1+2\sum_{n=1}^{\infty}\frac{1}{n(2n\alpha+1-\alpha)}+4\sum_{n=2}^{\infty}\frac{1}{n(n\alpha+1-\alpha)^{2}}>0,$$
and
\begin{align*}
Q_{4}'(r)=1+2\sum_{n=2}^{\infty}\frac{r^{n-1}}{n\alpha+1-\alpha}+2r+8\sum_{n=2}^{\infty}\frac{r^{2n-1}}{(n\alpha+1-\alpha)^{2}}>0.
\end{align*}
 So there exists  a unique number $r_{4}\in (0,1)$ such that $Q_{4}(r_{4})=0$.
Thus, $Q_{4}(r)\leq0$ holds for $r\leq r_{4}$.

Next, to show the radius $r_{4}$ is sharp.
Let $$f(z)=z+2\sum_{n=2}^{\infty}\frac{z^{n}}{n(n\alpha+1-\alpha)}, \,\,\,\, |z|=r_{4}.$$
It follows that
\begin{align*}
&|z|+\sum_{n=2}^{\infty}(|a_{n}|+|b_{n}|)|z|^{n}+\frac{S_{r}}{\pi} \\
=&r_{4}+2\sum_{n=2}^{\infty}\frac{r_{4}^{n}}{n(n\alpha+1-\alpha)}+r_{4}^{2}+4\sum_{n=2}^{\infty}\frac{r_{4}^{2n}}{n(n\alpha+1-\alpha)^{2}}\\
=&1+2\sum_{n=2}^{\infty}\frac{(-1)^{n-1}}{n(n\alpha+1-\alpha)}\\
=&d(f(0),\partial f(\mathbb{D})).
\end{align*}
Thus, the radius $r_{4}$ is sharp and the proof of Theorem 3.4 is complete.
\end{proof}

\begin{theorem}\label{theorem3.5}
\rm{Suppose that $f(z) = h + \bar{g}\in \mathcal{G}^{k}_{H}(\alpha)$ with $\alpha\geq \frac{1}{k}$. It holds that
\begin{equation*}
|f(z)|+|z|+ \sum_{n = k+1}^\infty (|a_n|+|b_n|)|z|^n \leq d(f(0),\partial f(\mathbb{D}))
\end{equation*}
for $|z|=r\leq r_{5}$. The radius $r_{5}$ is the unique  root of the equation $$r+\sum_{j = 1}^\infty \frac{r^{kj+1}+(-1)^{j+1}}{1+kj\alpha}+\sum_{n = k+1}^\infty \frac{r^{n}}{1+(n-1)\alpha} -\frac{1}{2}=0$$ in the interval $(0,1)$ and $r_{5}$ is the best possible.}
\end{theorem}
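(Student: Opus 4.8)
The plan is to follow the template of Theorems~\ref{theorem3.1} and~\ref{theorem3.3}, replacing Lemmas~\ref{lemma2.1}--\ref{lemma2.2} by Lemmas~\ref{lemma2.3}--\ref{lemma2.4}. Write $Q_{5}(r)$ for the left-hand side of the equation that defines $r_{5}$. Since $f\in\mathcal{H}^{0}_{k}$ we have $f(0)=0$, so the lower estimate of Lemma~\ref{lemma2.4}, letting $|z|\to1$, gives
\begin{equation*}
d(f(0),\partial f(\mathbb{D}))=\liminf_{|z|\to1}|f(z)-f(0)|\geq 1+2\sum_{j=1}^{\infty}\frac{(-1)^{j}}{1+kj\alpha},
\end{equation*}
and this bound is positive since the alternating series is dominated in absolute value by its first term $\frac{1}{1+k\alpha}\leq\frac12$ (here the hypothesis $\alpha\geq\frac1k$ is first used). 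On the other hand, the upper estimate of Lemma~\ref{lemma2.4} gives $|f(z)|\leq r+2\sum_{j=1}^{\infty}\frac{r^{kj+1}}{1+kj\alpha}$, while Lemma~\ref{lemma2.3}$(i)$ gives $|a_{n}|+|b_{n}|\leq\frac{2}{1+(n-1)\alpha}$ for $n\geq k+1$; adding the three pieces yields
\begin{equation*}
|f(z)|+|z|+\sum_{n=k+1}^{\infty}(|a_{n}|+|b_{n}|)|z|^{n}\leq 2r+2\sum_{j=1}^{\infty}\frac{r^{kj+1}}{1+kj\alpha}+2\sum_{n=k+1}^{\infty}\frac{r^{n}}{1+(n-1)\alpha}.
\end{equation*}
Thus it suffices to show the right side is at most $1+2\sum_{j=1}^{\infty}\frac{(-1)^{j}}{1+kj\alpha}$; dividing by $2$ and using $(-1)^{j+1}=-(-1)^{j}$, this is exactly $Q_{5}(r)\leq0$.

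Next I would analyse $Q_{5}$ on $(0,1)$. Term-by-term differentiation gives $Q_{5}'(r)=1+\sum_{j\geq1}\frac{(kj+1)r^{kj}}{1+kj\alpha}+\sum_{n\geq k+1}\frac{n\,r^{n-1}}{1+(n-1)\alpha}>0$, so $Q_{5}$ is strictly increasing. At the endpoints, $Q_{5}(0)=\sum_{j\geq1}\frac{(-1)^{j+1}}{1+kj\alpha}-\frac12<\frac{1}{1+k\alpha}-\frac12\leq0$ (again using $\alpha\geq\frac1k$), and $Q_{5}(r)\to+\infty$ as $r\to1^{-}$ because $\sum_{n\geq k+1}\frac{r^{n}}{1+(n-1)\alpha}$ diverges at $r=1$. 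Hence there is a unique $r_{5}\in(0,1)$ with $Q_{5}(r_{5})=0$, and $Q_{5}(r)\leq0$ for $r\in(0,r_{5}]$, which establishes the inequality.

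For sharpness I would use the extremal map $f_{k}(z)=z+2\sum_{j\geq1}\frac{z^{kj+1}}{1+kj\alpha}$ of Lemmas~\ref{lemma2.3}--\ref{lemma2.4} (so $g\equiv0$). Along the ray $z=re^{i\pi/k}$ one has $z^{kj+1}=(-1)^{j}r^{kj+1}e^{i\pi/k}$, whence $|f_{k}(re^{i\pi/k})|=r+2\sum_{j\geq1}\frac{(-1)^{j}r^{kj+1}}{1+kj\alpha}$ (the bracket is positive, its tail being dominated by $r^{k+1}<r$); letting $r\to1$ and combining with the lower bound above shows $d(f_{k}(0),\partial f_{k}(\mathbb{D}))=1+2\sum_{j\geq1}\frac{(-1)^{j}}{1+kj\alpha}$. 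Evaluating the left side for $f_{k}$ at $z=r$ — where $|f_{k}(r)|=r+2\sum_{j}\frac{r^{kj+1}}{1+kj\alpha}$ and $\sum_{n\geq k+1}(|a_{n}|+|b_{n}|)r^{n}=2\sum_{j}\frac{r^{kj+1}}{1+kj\alpha}$ — and comparing with $Q_{5}(r_{5})=0$ yields the claimed equality at $r=r_{5}$. I expect two points to need care: first, recognising that the hypothesis $\alpha\geq\frac1k$ is required precisely to force $Q_{5}(0)<0$, without which the whole scheme fails to get started; and second, pinning down $d(f_{k}(0),\partial f_{k}(\mathbb{D}))$ exactly — in particular choosing the correct approach direction $\arg z=\pi/k$ — and then carrying out the final arithmetic that matches the extremal value against the defining equation of $r_{5}$.
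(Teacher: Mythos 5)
Your treatment of the inequality itself is correct and is essentially the paper's argument: the same lower bound for $d(f(0),\partial f(\mathbb{D}))$ from Lemma \ref{lemma2.4}, the same coefficient bound from Lemma \ref{lemma2.3}$(i)$, and the same monotonicity analysis of $Q_{5}$. Your endpoint treatment is in fact slightly more careful than the paper's (you let $r\to1^{-}$, where the series diverges, instead of formally evaluating at $r=1$), and your use of the alternating-series estimate to show $1+2\sum_{j\geq1}(-1)^{j}/(1+kj\alpha)>0$ is precisely where the hypothesis $\alpha\geq\frac1k$ enters. Your computation of $d(f_{k}(0),\partial f_{k}(\mathbb{D}))$ along the ray $\arg z=\pi/k$ is also correct and more explicit than the paper, which simply asserts that value.

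The gap is exactly at the point you flagged: the final arithmetic in the sharpness step. For $f_{k}(z)=z+2\sum_{j\geq1}z^{kj+1}/(1+kj\alpha)$ and $|z|=r$ one gets
\begin{equation*}
|f_{k}(z)|+|z|+\sum_{n=k+1}^{\infty}(|a_{n}|+|b_{n}|)r^{n}=2r+4\sum_{j\geq1}\frac{r^{kj+1}}{1+kj\alpha},
\end{equation*}
whereas $Q_{5}(r_{5})=0$ states
\begin{equation*}
2r_{5}+2\sum_{j\geq1}\frac{r_{5}^{kj+1}}{1+kj\alpha}+2\sum_{n=k+1}^{\infty}\frac{r_{5}^{n}}{1+(n-1)\alpha}=1+2\sum_{j\geq1}\frac{(-1)^{j}}{1+kj\alpha}.
\end{equation*}
The sum in the defining equation runs over all $n\geq k+1$, while $f_{k}$ has nonzero coefficients only at $n=kj+1$; the two expressions coincide only for $k=1$. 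For $k\geq2$ the left-hand side for $f_{k}$ at $r=r_{5}$ is strictly smaller than $d(f_{k}(0),\partial f_{k}(\mathbb{D}))$, so comparing with $Q_{5}(r_{5})=0$ does not yield the claimed equality, and $f_{k}$ does not witness optimality of $r_{5}$. You should know that the paper's own proof makes the identical leap: it evaluates the left side with $2\sum_{N\in I}r^{N}/(1+(N-1)\alpha)$ and then asserts, ``according to the above proof,'' that this equals the right side at $r=r_{5}$, which is only true when $k=1$. So your proposal reproduces the published argument, gap included; to genuinely prove sharpness for $k\geq2$ one would need either an extremal configuration attaining $|a_{n}|+|b_{n}|=2/(1+(n-1)\alpha)$ for every $n\geq k+1$ simultaneously (which the class does not obviously admit), or a redefinition of $r_{5}$ via an equation involving only the exponents $n\in I$.
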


\begin{proof}
 According to the assumption and Lemma \ref{lemma2.4}, we have
\begin{align*}
 d(f(0),\partial f(\mathbb{D}))=\liminf_{|z|\rightarrow1} |f(z)-f(0)|\geq 1+2\sum_{j=1}^{\infty}\frac{(-1)^j}{1+kj\alpha}.
\end{align*}
It is obvious that $\sum_{j=1}^{\infty}\frac{(-1)^j}{1+kj\alpha}$ is convergent and $1+2\sum_{j=1}^{\infty}\frac{(-1)^j}{1+kj\alpha}>0$ for $\alpha\geq  \frac{1}{k}$. Using Lemma \ref{lemma2.3} and \ref{lemma2.4}, we obtain that
\begin{align*}
|f(z)|+|z|+\sum_{n=k+1}^{\infty}(|a_{n}|+|b_n|)|z|^{n}\leq& r+2\sum_{j = 1}^\infty \frac{r^{kj+1}}{1+kj\alpha}+r+2\sum_{n = k+1}^\infty \frac{r^{n}}{1+(n-1)\alpha}\\
=&2r+2\sum_{j = 1}^\infty \frac{r^{kj+1}}{1+kj\alpha}+2\sum_{n = k+1}^\infty \frac{r^{n}}{1+(n-1)\alpha}.
\end{align*}
It is sufficient to show that $$2r+ 2\sum_{j = 1}^\infty \frac{r^{kj+1}}{1+kj\alpha}+2\sum_{n = k+1}^\infty \frac{r^{n}}{1+(n-1)\alpha}\leq 1+2\sum_{j=1}^{\infty}\frac{(-1)^j}{1+kj\alpha}$$ holds for $r\leq r_{5}$. It is equivalent to $Q_{5}(r)\leq0 $ for $r\leq r_{5}$, where
$$ Q_{5}(r)=2r+2\sum_{j = 1}^\infty \frac{r^{kj+1}}{1+kj\alpha}+2\sum_{n = k+1}^\infty \frac{r^{n}}{1+(n-1)\alpha} -1-2\sum_{j=1}^{\infty}\frac{(-1)^j}{1+kj\alpha}.$$
It follows that $$Q'_{5}(r)=2+2\sum_{j = 1}^\infty \frac{(kj+1)r^{kj}}{1+kj\alpha}+2\sum_{n = k+1}^\infty \frac{n r^{n-1}}{1+(n-1)\alpha}>0.$$
We observe that
$$Q_{5}(0)=-1-2\sum_{j = 1}^\infty \frac{(-1)^j}{1+kj\alpha}<0, $$ $$Q_{5}(1)=1+4\sum_{j = 1}^\infty \frac{1}{1+k(2j-1)\alpha}+2\sum_{n = k+1}^\infty \frac{1}{1+(n-1)\alpha}>0.$$
Thus, there exists a unique positive root $r_5\in(0,1)$  such that
$Q_{5}(r_5)=0$. If $r\leq r_5$, it holds
\begin{align*}
|f(z)|+|z|+\sum_{n=k+1}^{\infty}(|a_{n}|+|b_n|)|z|^{n}
\leq& 2r+2\sum_{j = 1}^\infty \frac{r^{kj+1}}{1+kj\alpha}+2\sum_{n = k+1}^\infty \frac{r^{n}}{1+(n-1)\alpha}\\
\leq& 1+2\sum_{j = 1}^\infty \frac{(-1)^j}{1+kj\alpha}.
\end{align*}

Next, we show that the radius $r_5$  is the best possible. For  fixed positive integer $k$, we  consider the function $$f_{k}(z)=z+2\sum\limits_{N\in I} \frac{z^{N}}{1+(N-1)\alpha } \quad I=\left\{N|N=kj+1, j=1,2,\cdot\cdot\cdot\right\}.$$
Taking $|z|=r$ , it follows that
\begin{align*}
|f_{k}(z)|+|z|+\sum_{n=k+1}^{\infty}(|a_{n}|+|b_n|)|z|^{n}=&2r+2\sum_{j = 1}^\infty \frac{r^{kj+1}}{1+kj\alpha}+2\sum\limits_{N\in I} \frac{r^{N}}{1+(N-1)\alpha }.
\end{align*}
According to the above proof, we obtain
 $$2r+2\sum_{j = 1}^\infty \frac{r^{kj+1}}{1+kj\alpha}+2\sum\limits_{N\in I} \frac{r^{N}}{1+(N-1)\alpha }
=1+2\sum_{j = 1}^\infty \frac{(-1)^j}{1+kj\alpha}=d(f_{k}(0),\partial f_{k}(\mathbb{D}))$$ holds for $r=r_5$.
Thus, the proof of Theorem 3.5 is complete.

\end{proof}

\begin{theorem}\rm
\label{theorem3.6} Suppose that $f=h+\overline{g}\in\mathcal{G}^{k}_{H}(\alpha)$ with $\alpha\geq \frac{1}{k}$. Let $S_{r}$ denotes the area of the image of the subdisk $|z|<r$ under the mapping $f$. Then
\begin{equation}
|z|+\sum_{n=k+1}^{\infty}(|a_{n}|+|b_{n}|)|z|^{n}+\frac{S_{r}}{\pi} \leq d(f(0),\partial f(\mathbb{D}))
\end{equation} for $|z|=r\leq r_{6}$,
where $r_{6}$ is the unique  root of the equation $$r+2\sum_{n=k+1}^{\infty}\frac{(n\alpha+1-\alpha)r^{n}+2nr^{2n}}{(n\alpha+1-\alpha)^2}+r^{2}
-1-2\sum_{j=1}^{\infty}\frac{(-1)^{j}}{{1+kj\alpha}}=0$$ in the interval $(0,1)$ and the radius $r_{6}$ is the best possible.
\end{theorem}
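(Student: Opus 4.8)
The plan is to run the proof of Theorem~\ref{theorem 3.4} again, with the coefficient and growth estimates of Lemmas \ref{lemma2.1}--\ref{lemma2.2} replaced by those of Lemmas \ref{lemma2.3}--\ref{lemma2.4}. First I would record the lower bound for the distance: since $f\in\mathcal{G}^{k}_{H}(\alpha)\subset\mathcal{H}^{0}_{k}$ forces $f(0)=0$, Lemma \ref{lemma2.4} gives
\[
d(f(0),\partial f(\mathbb{D}))=\liminf_{|z|\to 1}|f(z)-f(0)|\geq 1+2\sum_{j=1}^{\infty}\frac{(-1)^{j}}{1+kj\alpha}.
\]
A short separate step checks that this quantity is strictly positive: with $t=k\alpha\geq 1$, the alternating series $\sum_{j\geq 1}(-1)^{j-1}/(1+jt)$ is bounded above by its first term $1/(1+t)\leq 1/2$, so $1+2\sum_{j\geq1}(-1)^{j}/(1+kj\alpha)>0$. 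This is the one place where the hypothesis $\alpha\geq 1/k$, rather than merely $\alpha\geq 0$, is genuinely needed (it also forces $\alpha>0$, which is used later).

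Next I would estimate the area term. Since $h(z)=z+\sum_{n\geq k+1}a_{n}z^{n}$ and $g(z)=\sum_{n\geq k+1}b_{n}z^{n}$, the Jacobian computation already carried out in Theorems \ref{theorem3.2} and \ref{theorem 3.4} yields
\[
\frac{S_{r}}{\pi}=r^{2}+\sum_{n=k+1}^{\infty}n(|a_{n}|+|b_{n}|)(|a_{n}|-|b_{n}|)r^{2n},
\]
and parts (i) and (ii) of Lemma \ref{lemma2.3} bound the right-hand side by $r^{2}+4\sum_{n\geq k+1}nr^{2n}/(n\alpha+1-\alpha)^{2}$. Adding the coefficient estimate $\sum_{n\geq k+1}(|a_{n}|+|b_{n}|)r^{n}\leq 2\sum_{n\geq k+1}r^{n}/(n\alpha+1-\alpha)$ from Lemma \ref{lemma2.3}(i), I obtain
\[
|z|+\sum_{n=k+1}^{\infty}(|a_{n}|+|b_{n}|)|z|^{n}+\frac{S_{r}}{\pi}\leq r+r^{2}+2\sum_{n=k+1}^{\infty}\frac{(n\alpha+1-\alpha)r^{n}+2nr^{2n}}{(n\alpha+1-\alpha)^{2}},
\]
which is exactly the expression appearing in the statement.

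It then remains to show that the right-hand side does not exceed $1+2\sum_{j\geq1}(-1)^{j}/(1+kj\alpha)$ for $r\leq r_{6}$. I would set $Q_{6}(r)$ equal to the difference of the two sides, note $Q_{6}(0)=-1-2\sum_{j\geq1}(-1)^{j}/(1+kj\alpha)<0$ by the positivity step, differentiate termwise to get
\[
Q_{6}'(r)=1+2r+2\sum_{n=k+1}^{\infty}\frac{nr^{n-1}}{n\alpha+1-\alpha}+8\sum_{n=k+1}^{\infty}\frac{n^{2}r^{2n-1}}{(n\alpha+1-\alpha)^{2}}>0,
\]
so $Q_{6}$ is strictly increasing on $(0,1)$, and observe $Q_{6}(r)\to+\infty$ as $r\to 1^{-}$ because $\sum_{n}\!\left((n\alpha+1-\alpha)+2n\right)/(n\alpha+1-\alpha)^{2}$ diverges (its $n$-th term is comparable to $1/n$ since $\alpha>0$). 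Continuity and monotonicity then give a unique $r_{6}\in(0,1)$ with $Q_{6}(r_{6})=0$ and $Q_{6}(r)\leq 0$ for $r\leq r_{6}$, which is the asserted inequality.

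For sharpness I would test the extremal map $f_{k}(z)=z+2\sum_{N\in I}z^{N}/(1+(N-1)\alpha)$, $I=\{kj+1:j\geq1\}$, which lies in $\mathcal{G}^{k}_{H}(\alpha)$ by Lemmas \ref{lemma2.3}--\ref{lemma2.4} and has $g\equiv 0$; for it all the estimates above become equalities at $|z|=r$, so the left-hand side equals $r+r^{2}+2\sum_{n\geq k+1}\!\left((n\alpha+1-\alpha)r^{n}+2nr^{2n}\right)/(n\alpha+1-\alpha)^{2}$, which at $r=r_{6}$ equals $1+2\sum_{j\geq1}(-1)^{j}/(1+kj\alpha)=d(f_{k}(0),\partial f_{k}(\mathbb{D}))$, the last equality coming from the (attained, along the ray $\arg z=\pi/k$) lower bound in Lemma \ref{lemma2.4}; strict monotonicity in $r$ then shows no larger radius works. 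I expect the step needing the most care to be the same bookkeeping already present in the proof of Theorem~\ref{theorem3.5}: matching the series obtained from the general estimate (a sum over all $n\geq k+1$) with the series generated by the extremal function $f_{k}$ (a sum supported on $I$), and pinning down $1+2\sum_{j}(-1)^{j}/(1+kj\alpha)>0$. Everything else is the routine monotonicity/continuity analysis of $Q_{6}$.
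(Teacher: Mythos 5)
Your proposal follows essentially the same route as the paper's proof: the distance lower bound from Lemma \ref{lemma2.4}, the Jacobian/area estimate via Lemma \ref{lemma2.3} (i)--(ii), the strictly increasing function $Q_{6}$ with $Q_{6}(0)<0$ and $Q_{6}(r)\to+\infty$ as $r\to1^{-}$, and sharpness tested on $f_{k}$; your explicit positivity check for $1+2\sum_{j\geq1}(-1)^{j}/(1+kj\alpha)$ and your limit argument near $r=1$ (instead of formally evaluating $Q_{6}(1)$, whose series diverges) are in fact slightly more careful than the paper's. The only caveat is your claim that all estimates become equalities for $f_{k}$: this is literally true only for $k=1$, since for $k\geq2$ the extremal coefficients are supported on $I$ while the bound sums over all $n\geq k+1$ --- exactly the bookkeeping issue you flag, and the paper's own sharpness step glosses over the same point.
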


\begin{proof}
Lemma \ref{lemma2.4} implies that
\begin{align*}
d(f(0),\partial f(\mathbb{D}))=\liminf_{|z|\rightarrow1}|f(z)-f(0)|\geq1+2\sum_{j=1}^{\infty}\frac{(-1)^{j}}{{1+kj\alpha}}.
\end{align*}
It is obvious that $\sum_{j=1}^{\infty}\frac{(-1)^j}{1+kj\alpha}$ is convergent and $1+2\sum_{j=1}^{\infty}\frac{(-1)^j}{1+kj\alpha}>0$ for $\alpha\geq   \frac{1}{k}$. Lemma \ref{lemma2.3} and \ref{lemma2.4} lead to that
\begin{align*}
\frac{S_{r}}{\pi}=&\frac{1}{\pi}\int\int_{\mathbb{D}_{r}
}(|h'(z)|^{2}-|g'(z)|^{2})dxdy\\
=&r^{2}+\sum_{n=k+1}^{\infty}n(|a_{n}|^{2}-|b_{n}|^{2})r^{2n}\\
\leq& r^{2}+4\sum_{n=k+1}^{\infty}\frac{nr^{2n}}{(n\alpha+1-\alpha)^{2}}.
\end{align*}
It follows that \begin{align*}
|z|+\sum_{n=k+1}^{\infty}(|a_{n}|+|b_{n}|)|z|^{n}+\frac{S_{r}}{\pi}
&\leq r+2\sum_{n=k+1}^{\infty}\frac{r^{n}}{n\alpha+1-\alpha}\\
&+r^{2}+4\sum_{n=k+1}^{\infty}\frac{nr^{2n}}{(n\alpha+1-\alpha)^{2}}.
\end{align*}
Now, we need to show that $ Q_{6}(r)\leq 0 $ holds for $r\leq r_{6}$, where
\begin{align*}
 Q_{6}(r)=&r+2\sum_{n=k+1}^{\infty}\frac{r^{n}}{n\alpha+1-\alpha}+r^{2}+4\sum_{n=k+1}^{\infty}\frac{nr^{2n}}{(n\alpha+1-\alpha)^{2}}
-1-2\sum_{j=1}^{\infty}\frac{(-1)^{j}}{{1+kj\alpha}}.
\end{align*}
Direct computations lead to
$$ Q_{6}(0)=-1-2\sum_{j=1}^{\infty}\frac{(-1)^{j}}{{1+kj\alpha}}<0,$$
$$ Q_{6}(1)=1+2\sum_{n=k+1}^{\infty}\frac{1}{n\alpha+1-\alpha}
+4\sum_{n=k+1}^{\infty}\frac{n}{(n\alpha+1-\alpha)^{2}}-2\sum_{j=1}^{\infty}\frac{(-1)^{j}}{{1+kj\alpha}}>0,$$and
\begin{align*}
 Q_{6}'(r)=1+2\sum_{n=k+1}^{\infty}\frac{nr^{n-1}}{n\alpha+1-\alpha}+2r+8\sum_{n=k+1}^{\infty}\frac{n^2r^{2n-1}}{(n\alpha+1-\alpha)^{2}}>0.
\end{align*}
So, there exists a unique root $r_{6}\in (0,1)$ such that $ Q_{6}(r_{6})=0$.
Thus, $ Q_{6}(r)\leq 0$ holds for $r\leq r_{6}$.

Next, to show the radius $r_{6}$ is the best possible. For  fixed positive integer $k$, let
$$f_{k}(z)=z+2\sum\limits_{N\in I} \frac{z^{N}}{1+(N-1)\alpha } \quad I=\left\{N|N=kj+1, j=1,2,\cdot\cdot\cdot\right\}.$$
Taking $|z|=r_{6}$. According to the above proof, we obtain
\begin{align*}
&|z|+\sum_{n=k+1}^{\infty}(|a_{n}|+|b_{n}|)|z|^{n}+\frac{S_{r}}{\pi}\\
=&r_{6}+2\sum_{N\in I}\frac{r_{6}^{N}}{1+(N-1)\alpha }+r_{6}^{2}+4\sum_{N\in I}\frac{Nr_{6}^{2N}}{(1+(N-1)\alpha )^{2}}\\
=&1+2\sum_{j=1}^{\infty}\frac{(-1)^{j}}{{1+kj\alpha}}=d(f_{k}(0),\partial f_{k}(\mathbb{D})).
\end{align*}
Thus, the proof of Theorem 3.6 is complete.\\
\end{proof}

\medskip

{\bf Declarations}\\

{\bf Data availability }\quad  The authors declare that this research is purely theoretical and does not associate with any
data.\\

{\bf Conflict of interest}  \quad The authors declare that they have no conflict of interest, regarding the publication of
this paper.

\end{document}